\newcommand{\nc}{\newcommand}
\nc{\bC}{\bold{C}} \nc{\bN}{\Bbb{N}} \nc{\cF}{\mathcal{F}}
\nc{\cE}{\mathcal{E}} \nc{\cR}{\mathcal{R}} \nc{\cM}{\mathcal{M}}
\nc{\al}{\alpha} \nc{\bt}{\beta} \nc{\gm}{\gamma} \nc{\dl}{\delta}
\nc{\om}{\omega} \nc{\sg}{\sigma} \nc{\Sg}{\Sigma} \nc{\vf}{\varphi}
\nc{\ve}{\varepsilon} \nc{\os}{\overset} \nc{\ol}{\overline}
\nc{\ul}{\underline} \nc{\us}{\underset} \nc{\sbs}{\subset}
\nc{\bsl}{\backslash} \nc{\Ra}{\Rightarrow}
\nc{\lra}{\longrightarrow} \nc{\all}{\allowdisplaybreaks}
\nc{\Codes}{\operatorname{{\bold{Codes}}}}
\nc{\RegMono}{\operatorname{\mathcal{R}{\rm{eg}\mathcal{M}{\rm{ono}\!}}}}
\nc{\RegEpi}{\operatorname{\mathcal{R}{\rm{eg}\mathcal{E}{\rm{pi}\!}}}}
\nc{\Mn}{\operatorname{\mathcal{M}{\rm{ono}\!}}}
\nc{\Ep}{\operatorname{\mathcal{E}{\rm{pi}\!}}}
\nc{\Rg}{\operatorname{\mathcal{R}{\rm{eg}\!}}}
\nc{\Ob}{\operatorname{Ob\!}}
\numberwithin{equation}{section}
\newtheorem{theo}{\ \ \ Theorem}[section]
\newtheorem{lem}[theo]{\ \ \ Lemma}
\newtheorem{prop}[theo]{\ \ \ Proposition}
\newtheorem{cor}[theo]{\ \ \ Corollary}
\theoremstyle{definition}
\newtheorem{exmp}[theo]{\ \ \ Example}
\theoremstyle{remark}
\begin{document}

\title[]
{Cokernels in the stable category of a left hereditary ring}

\author{Dali Zangurashvili}

\maketitle

\begin{abstract}
It is proved that if a ring is left hereditary, left perfect and right coherent, then the stable category has cokernels. Moreover, we show that the condition for a ring to be left perfect and right coherent is also necessary for the stable category to have cokernels, provided that the ring is left hereditary and satisfies the additional condition that there are no non-trivial projective injective left modules over it (satisfied, for instance, by integral domains). This, in particular, implies that, for a Dedekind domain, the stable category has cokernels if and only if the domain is left perfect. Several new necessary and sufficient conditions for a left hereditary ring to be left perfect and right coherent are found. One of them requires that the full subcategory of projective modules be reflective in the category of modules. Another one requires that any module be isomorphic to a stable module in the stable category. Yet another equivalent condition found in the paper requires that, for any module $M$, among all representations $M=K\oplus P$ with a projective $P$, there should be the one with the smallest $K$. To accomplish the goals, a version of the well-known Freyd's adjoint functor theorem, where the solution set condition is removed under some additional conditions on the categories, is given. 

\noindent{\bf Key words and phrases}: stable category, cokernel, left hereditary ring, left perfect ting, right coherent, stable module.

\noindent{\bf 2020  Mathematics Subject Classification}: 18G65, 18A30, 16E60, 18A40.
\end{abstract}

\section{Introduction}
In our joint paper \cite{MZ} with Alex Martsinkovsky we studied some properties of the stable category $\underline{\Lambda-Mod}$ of a left hereditary ring $\Lambda$. In particular, we proved that this category has kernels and finite products, and if, in addition, a ring is left perfect and right coherent, then the stable category is complete. We also showed that the stable category has coproducts (for, not only left hereditary, but an arbitrary ring). However, the existence of cokernels was proved only in the case where the ring $\Lambda$ is the finite direct product of complete blocked triangular matrix algebras over a division ring (note that the rings $\Lambda$ of the described kind are precisely rings for which the stable category is Abelian, as it was proved in \cite{MZ}).

In the present paper we generalize the above-mentioned result on cokernels in the stable category. Namely, we prove that if a ring is left hereditary, left perfect and right coherent, then the stable category has cokernels. Moreover, we show that  the condition for a ring to be left perfect and right coherent is also necessary for the stable category to have cokernels, provided that the ring is left hereditary and satisfies the additional condition that there are no non-trivial projective injective left modules over it (satisfied, for instance, by integral domains). This, in particular, implies that, for a Dedekind domain, the stable category has cokernels if and only if the domain is left perfect. 

There is the well-known criterion for a ring to be left perfect and right coherent due to Chase \cite{C}. It asserts that a ring is such if and only if direct product of any family of projective modules is projective. In our joint paper \cite{MZ} we gave another criterion for a ring to be of this kind, and also two more criteria for a ring to be left hereditary ring, left perfect and right coherent formulated in terms of the quotient functor $Q:\Lambda-Mod\rightarrow \underline{\Lambda-Mod}$. 

 In this paper we find several new criteria for a ring to be of the latter kind. One of them asserts that a ring is left hereditary, left perfect and right coherent if and only if the full subcategory of projective modules is epireflective in the category of all left modules. Here "epireflective" can be replaced by "reflective" provided that a ring is left hereditary. Another equivalent condition found in this paper, for rings of the latter kind, requires that any module be isomorphic to a stable module in the stable category. Yet another equivalent condition requires that, for any module $M$, among all representations $M=K\oplus P$ with a projective $P$, there should be the one with the smallest $K$ (which obviously is stable).

To accomplish our goals we give a version of the well-known Freyd's adjoint functor theorem, where the solution set condition is removed under some additional conditions on the categories.

The author expresses her gratitude to Alex Martsinkovsky and other participants of the seminar held in Tbilisi, 2019, under the framework of the research project "Stable Structures in Homological Algebra" for valuable talks on the subject of the paper. 

Financial support from  Shota Rustaveli  Georgian National Science Foundation
(Ref.: FR-18-10849) is gratefully acknowledged.

\section{Stable modules}
Throughout the paper $\Lambda$ denotes an associative ring with identity and $\Lambda$-$Mod$ denotes the category of left $\Lambda$-modules. Moreover, "module" means a left $\Lambda$-module.

Recall that a module is called \textit{stable} if it has no non-zero projective summands.
\begin{lem} For the following conditions, we have 
\begin{center}
(i)$\Leftrightarrow$(ii)$\Leftarrow$(iii)$\Leftarrow$(iv). 
\end{center}
If $\Lambda$ is left hereditary, then the conditions (i)-(iv) are equivalent:\vskip+2mm

(i) A module $M$ is stable;\vskip+1mm

(ii) there is no an epimorphism $M\twoheadrightarrow P$ with projective non-trivial $P$;\vskip+1mm

(iii) there is no non-trivial homomorphism $M\rightarrow \Lambda$; \vskip+1mm

(iv) there is no non-trivial homomorphism $M\rightarrow P$ with projective $P$.
\vskip+1mm
\end{lem}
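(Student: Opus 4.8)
The plan is to treat separately the implications that hold over an arbitrary ring, namely the chain (i)$\Leftrightarrow$(ii)$\Leftarrow$(iii)$\Leftarrow$(iv), and the single extra implication needed to close the cycle once $\Lambda$ is assumed left hereditary. Throughout I would argue by contraposition, leaning on two structural facts: a projective module is a direct summand of a free module $\Lambda^{(I)}=\bigoplus_{i\in I}\Lambda$, so that together with the coordinate projections $\pi_i\colon\Lambda^{(I)}\to\Lambda$ it admits enough maps to $\Lambda$; and an epimorphism onto a projective module splits.

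For the arbitrary-ring part: for (i)$\Rightarrow$(ii), if there were an epimorphism $\pi\colon M\twoheadrightarrow P$ with $P$ projective and non-zero, projectivity of $P$ would split $\pi$ via a section $s$, giving $M=\ker\pi\oplus s(P)$ with $s(P)\cong P$ a non-zero projective summand, contradicting stability; conversely (ii)$\Rightarrow$(i) is immediate, since a non-zero projective summand of $M$ yields an epimorphism onto it by projection. The implication (iv)$\Rightarrow$(iii) is the special case $P=\Lambda$. Finally, for (iii)$\Rightarrow$(ii), given an epimorphism $M\twoheadrightarrow P$ onto a non-zero projective $P$, I would embed $P$ as a summand of some $\Lambda^{(I)}$, pick a non-zero $p\in P$, choose a coordinate $i$ in which the image of $p$ is non-zero, and form the composite $M\twoheadrightarrow P\hookrightarrow\Lambda^{(I)}\os{\pi_i}{\longrightarrow}\Lambda$; surjectivity of the first map makes the composite a non-trivial homomorphism $M\to\Lambda$, contradicting (iii).

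It remains, under the hypothesis that $\Lambda$ is left hereditary, to close the loop, and it suffices to prove (ii)$\Rightarrow$(iv): together with (iv)$\Rightarrow$(iii)$\Rightarrow$(ii) this makes (ii), (iii), (iv) equivalent, while (i)$\Leftrightarrow$(ii) has already been shown. Arguing contrapositively, suppose $f\colon M\to P$ is a non-trivial homomorphism with $P$ projective. As in the previous step, composing $f$ with a summand inclusion $P\hookrightarrow\Lambda^{(I)}$ and a suitable coordinate projection produces a non-trivial homomorphism $g\colon M\to\Lambda$. Here is the one point where hereditariness is essential: the image of $g$ is a non-zero left ideal of $\Lambda$, and over a left hereditary ring every left ideal is projective, so $\operatorname{im}(g)$ is a non-zero projective module. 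Then $M\twoheadrightarrow\operatorname{im}(g)$ is an epimorphism onto a non-trivial projective, contradicting (ii).

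The main obstacle is precisely isolating where left hereditariness does the work. Over an arbitrary ring one can always manufacture a non-trivial map $M\to\Lambda$ out of a non-trivial map $M\to P$ by the formal coordinate-projection argument; but promoting such a map to an epimorphism onto a projective requires its image ideal to be projective, which is exactly the defining property of a left hereditary ring. Everything else is routine bookkeeping with splittings and coordinate projections.
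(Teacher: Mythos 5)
Your proof is correct, and it takes the same underlying route as the paper while being fully self-contained --- which is the real difference, since the paper's proof consists entirely of citing Lemma 2.6 of \cite{MZ} for the implication (iii)$\Rightarrow$(i) together with its converse under left hereditariness, and declaring every other implication obvious. The two decompositions differ slightly in where hereditariness enters: in the paper it enters through (i)$\Rightarrow$(iii), whereas you close the loop with (ii)$\Rightarrow$(iv); but the substance is identical, namely that the image of a non-trivial homomorphism $M\rightarrow\Lambda$ is a non-zero left ideal, hence projective by hereditariness, so the surjection of $M$ onto it splits and yields a non-zero projective summand. Your coordinate-projection reduction from an arbitrary projective target $P\subseteq\Lambda^{(I)}$ down to $\Lambda$ is exactly the kind of step the paper files under ``obvious,'' and it is worth noting that this reduction uses no hypothesis on $\Lambda$ at all, so your argument actually establishes (iii)$\Leftrightarrow$(iv) over an arbitrary ring; hereditariness is needed only at the single point you isolate. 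What your version buys is independence from the reference \cite{MZ}; what the paper's buys is brevity.
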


\begin{proof}
The implication (iii)$\Rightarrow$(i) and its converse (in the case of a left hereditary ring), is shown in \cite{MZ} (see Lemma 2.6). All other implications are obvious.
\end{proof}

\begin{cor}
If $M\twoheadrightarrow N$ is an epimorphism and $M$ is stable, then $N$ is stable.
\end{cor}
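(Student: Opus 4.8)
The plan is to argue through characterization (ii) of the preceding Lemma rather than through the defining condition. The reason is that stability phrased as ``no non-zero projective summand'' does not transport cleanly along a quotient map $M\twoheadrightarrow N$, since a projective summand of $N$ need not lift to one of $M$. By contrast, the equivalent formulation (ii)---the non-existence of an epimorphism onto a non-trivial projective---is perfectly suited to this setting, precisely because epimorphisms are stable under composition. I would also stress that we may invoke (i)$\Leftrightarrow$(ii) freely, as this equivalence holds for an arbitrary ring $\Lambda$ (no hereditariness is assumed for it).

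Concretely, I would argue by contraposition. Suppose $N$ fails to be stable. Applying (i)$\Leftrightarrow$(ii) to $N$, there exists an epimorphism $N\twoheadrightarrow P$ onto a non-trivial projective module $P$. Precomposing with the given epimorphism $M\twoheadrightarrow N$ yields a composite
\[
M\twoheadrightarrow N\twoheadrightarrow P,
\]
which is again an epimorphism (the composite of two epimorphisms in $\Lambda$-$Mod$ is an epimorphism) onto the non-trivial projective $P$. Now applying (i)$\Leftrightarrow$(ii) in the other direction to $M$, the existence of such an epimorphism contradicts the assumed stability of $M$. Hence $N$ must be stable, as claimed.

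There is essentially no genuine obstacle here: the only fact used beyond the Lemma is that the composite of epimorphisms is an epimorphism, which is immediate in a module category. The single conceptual point worth flagging is the choice of the right equivalent description of stability---condition (ii) instead of the literal definition---since this is exactly what makes the one-line composition argument go through, and it is also why the statement needs no left-hereditary hypothesis, the equivalence (i)$\Leftrightarrow$(ii) being available unconditionally.
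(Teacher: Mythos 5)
Your proof is correct and is exactly the argument the paper intends: Corollary 2.2 is stated as an immediate consequence of Lemma 2.1, and the route through condition (ii) (composing the given epimorphism with a hypothetical epimorphism onto a non-trivial projective) is precisely how it follows, valid for an arbitrary ring since (i)$\Leftrightarrow$(ii) needs no hereditary hypothesis.
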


Let $\Lambda$-$Mod_{St}$ be the full subcategory of $\Lambda$-$Mod$ with objects being stable left modules.

\begin{cor}
The subcategory $\Lambda$-$Mod_{St}$ is closed under cokernels.
\end{cor}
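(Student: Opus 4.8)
The plan is to reduce the statement to the previous Corollary by recalling how cokernels are computed in $\Lambda$-$Mod$. Given a morphism $f:A\rightarrow B$ between stable modules $A$ and $B$, its cokernel in $\Lambda$-$Mod$ is the quotient module $B/\operatorname{im}(f)$, equipped with the canonical projection $\pi:B\twoheadrightarrow B/\operatorname{im}(f)$. The essential observation is that this projection is an epimorphism whose domain $B$ is stable; in particular, as far as stability of the cokernel is concerned, only the codomain $B$ of $f$ is relevant, and the source $A$ plays no role.

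From here I would invoke the preceding Corollary directly: since $\pi:B\twoheadrightarrow B/\operatorname{im}(f)$ is an epimorphism and $B$ is stable, the module $B/\operatorname{im}(f)$ is stable. Hence the cokernel of $f$ computed in $\Lambda$-$Mod$ already lies in the full subcategory $\Lambda$-$Mod_{St}$, and, being a quotient object of $B$ together with the universal projection, it is then automatically the cokernel of $f$ in $\Lambda$-$Mod_{St}$ as well. This establishes that $\Lambda$-$Mod_{St}$ is closed under cokernels.

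I expect no genuine obstacle here: the argument is essentially a one-line consequence of the previous Corollary, the only point worth spelling out being the identification of the cokernel of $f$ with the quotient of its codomain $B$ by an epimorphism. It is perhaps worth remarking that, in contrast to the dual question for kernels, the stability of the source $A$ is never used; stability of the cokernel follows from stability of $B$ alone, together with the fact (from the previous Corollary, which in turn rests on the equivalence (i)$\Leftrightarrow$(ii) of the Lemma) that epimorphic images of stable modules are stable.
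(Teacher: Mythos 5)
Your proof is correct and matches the paper's intended argument exactly: the paper states this Corollary as an immediate consequence of the preceding one (epimorphic images of stable modules are stable), applied to the canonical projection $B\twoheadrightarrow B/\operatorname{im}(f)$, which is precisely your reduction. Your additional remarks — that stability of the source plays no role and that the quotient, once known to be stable, serves as the cokernel within the full subcategory — are accurate and consistent with the paper.
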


\begin{lem}
Let $\Lambda$ be left hereditary. The direct sum of arbitrary family of stable left $\Lambda$-modules is stable if and only if all these modules are stable.
\end{lem}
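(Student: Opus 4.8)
The statement is an equivalence, and its two implications are of very different character. The ``only if'' direction --- if $\bigoplus_i M_i$ is stable then each summand $M_i$ is stable --- is the routine one, and in fact needs no hypothesis on $\Lambda$. If some $M_j$ failed to be stable, it would split off a non-zero projective summand, say $M_j=P\oplus K$ with $P$ projective and non-zero; but then $\bigoplus_i M_i = P\oplus\bigl(K\oplus\bigoplus_{i\neq j}M_i\bigr)$ exhibits $P$ as a non-zero projective summand of the whole direct sum, contradicting its stability. So the content lies entirely in the converse.

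For the converse I would exploit the equivalence (i)$\Leftrightarrow$(iii) from the previous Lemma, which is exactly where left-heredity enters. Assume every $M_i$ is stable. By condition (iii) this means $\operatorname{Hom}(M_i,\Lambda)=0$ for each $i$. Take any homomorphism $f\colon\bigoplus_i M_i\to\Lambda$, and restrict it along each canonical injection $M_i\hookrightarrow\bigoplus_i M_i$ to obtain a component $f_i\colon M_i\to\Lambda$; stability of $M_i$ forces $f_i=0$ for every $i$. Since an element of the direct sum is a finite sum of elements coming from the individual summands, $f$ annihilates all of $\bigoplus_i M_i$, i.e. $f=0$. Thus $\operatorname{Hom}\bigl(\bigoplus_i M_i,\Lambda\bigr)=0$, and applying (iii)$\Rightarrow$(i) once more shows that $\bigoplus_i M_i$ is stable.

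The only delicate point --- and the reason the hypothesis that $\Lambda$ is left hereditary cannot be dropped --- is the passage between stability and the vanishing of homomorphisms into $\Lambda$. For a general ring one has only (i)$\Leftrightarrow$(ii), and condition (ii) interacts poorly with coproducts: an epimorphism from $\bigoplus_i M_i$ onto a projective module need not restrict to epimorphisms on the summands, so the argument above would break down. Over a left hereditary ring the cleaner criterion (iii), combined with the universal property of the direct sum (a map out of a coproduct vanishes as soon as all its components vanish), makes the argument essentially formal. I therefore expect no real obstacle once the characterization (iii) is in hand: the whole proof reduces to the elementary observation that elements of a direct sum have finite support, so that a homomorphism out of it is determined by its restrictions to the summands.
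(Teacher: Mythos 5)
Your proof is correct and follows essentially the same route as the paper: both reduce stability to the vanishing of homomorphisms into $\Lambda$ via Lemma 2.1 (i)$\Leftrightarrow$(iii) and then use the fact that a map out of a direct sum vanishes exactly when all its components do. Your only addition is the observation that the ``only if'' direction can be done by an elementary splitting argument needing no hypothesis on $\Lambda$, which is a nice refinement but not a different method.
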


\begin{proof}
It suffices to observe that the existence of a non-zero homomorphism from the direct sum to $\Lambda$ is equivalent to the existence of such a homomorphism from at least one of summands, and then to apply Lemma 2.1.
\end{proof}

Corollary 2.3 and Lemma 2.4 immediately imply the following statements.

\begin{cor}
Let $\Lambda$ be left hereditary. The category $\Lambda$-$Mod_{St}$ is additive.
\end{cor}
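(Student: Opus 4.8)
The plan is to verify the defining ingredients of an additive category directly for $\Lambda$-$Mod_{St}$ viewed as a full subcategory of the abelian (hence additive) category $\Lambda$-$Mod$. Recall that a category is additive precisely when it is enriched in abelian groups, possesses a zero object, and admits all finite biproducts. The enrichment is inherited for free: since $\Lambda$-$Mod_{St}$ is a \emph{full} subcategory, for stable modules $M$ and $N$ the hom-set $\mathrm{Hom}_\Lambda(M,N)$ is literally the abelian group computed in $\Lambda$-$Mod$, and composition remains bilinear because it is so in the ambient category. Likewise, the zero object is immediate: the zero module has no non-zero projective summand, so it is stable and, fullness again, remains a zero object inside $\Lambda$-$Mod_{St}$.

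The substantive step is the existence of finite biproducts, and here I would invoke Lemma 2.4 in its finite case: for left hereditary $\Lambda$ the direct sum $M_1\oplus\cdots\oplus M_n$ of stable modules is again stable. Consequently the biproduct formed in $\Lambda$-$Mod$ already lies in $\Lambda$-$Mod_{St}$, and since the subcategory is full, the usual injections and projections together with the biproduct identities they satisfy exhibit it as a biproduct internal to $\Lambda$-$Mod_{St}$. Assembling the three points—inherited $\mathbf{Ab}$-enrichment, a zero object, and closure under finite biproducts—yields additivity.

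The only place where something genuinely has to be proved, and therefore the real content absorbed by the corollary, is closure under finite direct sums; everything else is routine transfer of structure along a full embedding. This is also where I expect the sole obstacle to lie, were one to drop hypotheses: without left hereditarity the direct sum of two stable modules need not be stable, because a projective summand can materialize only after combining the factors. Left hereditarity rules this out through the characterization of stability by the absence of non-trivial homomorphisms to $\Lambda$ (Lemma 2.1, (i)$\Leftrightarrow$(iii)), which is exactly what powers Lemma 2.4. I would therefore present the proof as a one-line deduction, citing Lemma 2.4 for biproducts and noting the inherited preadditive structure and zero object.
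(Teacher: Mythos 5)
Your proof is correct and follows essentially the same route as the paper, which derives the corollary immediately from Lemma 2.4 (closure of stable modules under finite direct sums, using left hereditarity); your write-up simply makes explicit the routine parts (inherited $\mathrm{Ab}$-enrichment, zero object, biproducts transferring along the full embedding) that the paper leaves implicit.
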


\begin{lem}
Let $\Lambda$ be left hereditary. Then the subcategory $\Lambda$-$Mod_{St}$ is cocomplete and the embedding functor \begin{center}
$L:\Lambda$-$Mod_{St}\rightarrow \Lambda$-$Mod$
\end{center} \noindent preserves them.
\end{lem}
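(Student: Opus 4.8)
The plan is to show that every small colimit of a diagram of stable modules, \emph{computed in the ambient category} $\Lambda$-$Mod$, is again stable; fullness of $L$ will then do the rest. Recall that $\Lambda$-$Mod$ is cocomplete, so for any small diagram $D\colon\mathcal{I}\to \Lambda$-$Mod_{St}$ the colimit of $L\circ D$ exists in $\Lambda$-$Mod$. The first step is to recall the standard presentation of this colimit as the coequalizer of the two canonical maps
\[
\bigoplus_{(u\colon i\to j)\in\operatorname{Mor}\mathcal{I}}D(i)\rightrightarrows \bigoplus_{i\in\operatorname{Ob}\mathcal{I}}D(i),
\]
whose coequalizing map $\pi\colon \bigoplus_{i}D(i)\twoheadrightarrow \operatorname{colim}(L\circ D)$ is an epimorphism.

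Now the two ingredients already recorded apply immediately. By Lemma 2.4 the coproduct $\bigoplus_{i}D(i)$ is stable, being a direct sum of stable modules. Since $\pi$ is an epimorphism, Corollary 2.2 yields that $\operatorname{colim}(L\circ D)$ is stable, hence lies in $\Lambda$-$Mod_{St}$.

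It then remains to check that this object, equipped with the colimit cocone, is the colimit \emph{inside} $\Lambda$-$Mod_{St}$. Because $L$ is fully faithful and the colimit object already belongs to $\Lambda$-$Mod_{St}$, every cocone in $\Lambda$-$Mod_{St}$ is in particular a cocone in $\Lambda$-$Mod$, and the unique mediating morphism supplied by the ambient universal property is, again by fullness, a morphism of $\Lambda$-$Mod_{St}$. Thus the universal property transfers verbatim, so $\Lambda$-$Mod_{St}$ has all small colimits and $L$ carries them to the corresponding colimits in $\Lambda$-$Mod$, i.e.\ preserves them.

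The argument is entirely routine once Lemma 2.4 and Corollary 2.2 are in hand; the only point deserving a moment's care is the observation that the colimit is an epimorphic quotient of the coproduct of the diagram's objects, which is exactly what lets Corollary 2.2 reduce a general colimit to the two closure properties. Alternatively one could invoke Corollary 2.5 that $\Lambda$-$Mod_{St}$ is additive together with the standard fact that an additive category with coproducts (Lemma 2.4) and cokernels (Corollary 2.3) is cocomplete; but this second route still requires the same epimorphic-image remark to conclude that $L$ \emph{preserves} these colimits rather than merely that they exist, so I would favor the direct computation above.
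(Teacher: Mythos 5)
Your proof is correct and is essentially the argument the paper intends: the paper derives this lemma ``immediately'' from Corollary 2.3 and Lemma 2.4, i.e., by presenting any colimit in $\Lambda$-$Mod$ via coproducts and cokernels and using closure of $\Lambda$-$Mod_{St}$ under both, which is exactly your computation. Your substitution of Corollary 2.2 (epimorphic images of stable modules are stable) for Corollary 2.3 is an immaterial variant, since the coequalizer in the standard presentation is precisely such an epimorphic quotient of the coproduct.
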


Before continue we give a version of the well-known Freyd's adjoint functor theorem.  We will use this statement three times in the paper. It follows from the version suggested by us in \cite{Z} (see Remark 3.1 (2)). But for the reader's convenience, we give here its direct proof. 

Let $\mathcal{C}$ be a category, and $\mathbb{E}$ be a morphism class. For any object $C$ of $\mathcal{C}$, an equivalence relation on the class of $\mathbb{E}$-morphisms with a domain $C$ arises in a natural way. Recall that $\mathcal{C}$ is said to be $\mathbb{E}$-co-well-powered if the quotient of the latter class by this equivalence relation is a set (see, e.g., \cite{B}). For the definition of a factorization system we refer the reader to the same book. Recall that if $(\mathbb{E},\mathbb{M})$ is a factorization system on $\mathcal{C}$, then the intersection $\mathbb{E}\cap\mathbb{M}$ coincides with the class of all isomorphisms of $\mathcal{C}$. Moreover, if $\mathbb{E}\subseteq Epi$, then the morphism class $\mathbb{M}$ satisfies the left cancellation property, i.e. if $gf\in \mathbb{M}$, then $f\in \mathbb{M}$ (see, e.g., \cite{Z1}).

Below by the symbols $Epi$ and $Mono$ we denote the classes of resp. epimorphisms and monomorphisms of the category $\mathcal{C}$. 

\begin{theo}
Let $\mathcal{C}$ be a complete category, and let $(\mathbb{E},\mathbb{M})$ be a factorization system on $\mathcal{C}$ with $\mathbb{E}\subseteq Epi$. Let $\mathcal{C}$ be $\mathbb{E}$-co-well-powered, and let $\mathcal{X}$ be a full replete subcategory of $\mathcal{C}$. The following conditions are equivalent:\vskip+2mm

(i) $\mathcal{X}$ is closed under products and $\mathbb{M}$-subobjects (the latter means that if a morphism $m:X\rightarrow X'$ lies in $\mathbb{M}$ and $X'$ is an object of $\mathcal{X}$, then $X$ also is an object of $\mathcal{X}$);\vskip+2mm 

(ii)  $\mathcal{X}$ is a reflective subcategory of $\mathcal{C}$, and the reflection unit components lie in $\mathbb{E}$;\vskip+2mm

(iii) $\mathcal{X}$ is a reflective subcategory of $\mathcal{C}$ and is closed under $\mathbb{M}$-subobjects.
\end{theo}

\begin{proof} (i)$\Rightarrow$(ii): Let $C$ be an object of $\mathcal{C}$. For any equivalence class $W$ of $\mathbb{E}$-morphisms with domain $C$ and codomains in $\mathcal{X}$, consider a representative $e_W$. The class of all morphisms $e_W$ is a set. Consider the product $A=\prod_{W}codom \ e_W$. Note that $A$ is in $\mathcal{X}$. The family of morphisms $e_W$ induces a morphism $\varrho:C\rightarrow A$. Consider the $(\mathbb{E},\mathbb{M})$-factorization $me$ of $\varrho$. Obviously the codomain $A'$ of the morphism $e'$ is an object of $\mathcal{X}$. We assert that $e$ is a universal arrow from $C$ to the subcategory $\mathcal{X}$. Indeed, consider any $f:C\rightarrow X'$ with $X'$ from $\mathcal{X}$. Let $m'e'$ be the  $(\mathbb{E},\mathbb{M})$-factorization of $f$ with  $e':C\twoheadrightarrow Y$. Then $Y$ also is an object of $\mathcal{X}$. Without loss of generality we can assume that the chosen representative of $e'$'s equivalence class is $e'$ itself. Therefore $\pi_{e'}\varrho=e'$, where $\pi_{e'}$ is the canonical projection $A\rightarrow codom \ e_W$. Then $(m'\pi_{e'} m)e=f$.


(ii)$\Rightarrow$(iii): Consider a morphism $m:X\rightarrow X'$ from $\mathbb{M}$ with $X'$ being an object of $\mathcal{X}$. Let $\eta$ be the unit of the reflection $r$. There is a morphism $h:r(X)\rightarrow X'$ such that $m=h\eta_X$. Hence $\eta_X\in \mathbb{M}$. This implies that $\eta_X$ is an isomorphism, and hence $X$ is an object of $\mathcal{X}$.

The implication (iii)$\Rightarrow$(i) is well-known.
\end{proof}

From Corollary 2.2, Lemma 2.4 and the dual of Theorem 2.7 we obtain

\begin{prop}
Let $\Lambda$ be left hereditary. The category $\Lambda$-$Mod_{St}$ is a coreflective subcategory of $\Lambda$-$Mod$. The coreflector $R$ sends a module $M$ to the sum of all its stable submodules, and the corresponding component of the counit $\varepsilon$ of the reflection is the embedding $R(M)\rightarrowtail M$.
\end{prop}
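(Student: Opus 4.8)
The plan is to apply the dual of Theorem 2.7 with $\mathcal{C}=\Lambda$-$Mod$, the factorization system $(\mathbb{E},\mathbb{M})=(Epi,Mono)$, and the subcategory $\mathcal{X}=\Lambda$-$Mod_{St}$. First I would record that the hypotheses of the dualized theorem hold: $\Lambda$-$Mod$ is cocomplete, it is $Mono$-well-powered (each module has only a set of submodules), the pair $(Epi,Mono)$ is a factorization system with $Mono\subseteq Mono$, and $\Lambda$-$Mod_{St}$ is a full replete subcategory (repleteness is clear, since having no non-zero projective summand is invariant under isomorphism). The dual of condition (i) then asks that $\mathcal{X}$ be closed under coproducts and under $\mathbb{E}$-quotients. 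Closure under coproducts is exactly Lemma 2.4 (this is where left hereditariness enters), and closure under $Epi$-quotients is exactly Corollary 2.2. Hence the dual of the implication (i)$\Rightarrow$(ii) yields that $\Lambda$-$Mod_{St}$ is coreflective in $\Lambda$-$Mod$ and that every component $\varepsilon_M$ of the counit lies in $\mathbb{M}=Mono$, i.e. is a monomorphism.

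It then remains to identify the coreflector explicitly. Writing $S_M$ for the sum inside $M$ of all stable submodules $S_i$ of $M$, I would first check that $S_M$ is itself stable: it is the image of the canonical morphism $\bigoplus_i S_i\to M$, the coproduct $\bigoplus_i S_i$ is stable by Lemma 2.4, and an epimorphic image of a stable module is stable by Corollary 2.2. Thus $S_M$ is a stable submodule of $M$, and by construction it is the largest one. I would then verify that the inclusion $S_M\rightarrowtail M$ is the universal arrow from $\Lambda$-$Mod_{St}$ to $M$, which pins down $R(M)=S_M$ together with $\varepsilon_M$ as this inclusion: given any stable module $S$ and a homomorphism $f\colon S\to M$, the image $f(S)$ is an epimorphic image of $S$, hence stable by Corollary 2.2, so $f(S)$ is a stable submodule of $M$ and therefore $f(S)\subseteq S_M$; consequently $f$ factors through $S_M\rightarrowtail M$, uniquely because the inclusion is a monomorphism. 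Alternatively, this identification can be read off the abstract conclusion: since $\varepsilon_M$ is a mono its image is a stable submodule of $M$, hence contained in $S_M$, while every stable submodule of $M$ factors through the universal arrow and so lies in the image of $\varepsilon_M$; the two inclusions force the image of $\varepsilon_M$ to be $S_M$.

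I expect the only genuine point to be the stability of $S_M$, which is also the step where left hereditariness is indispensable: passing from ``each $S_i$ is stable'' to ``their sum is stable'' combines coproduct-closure (Lemma 2.4, which fails in general without left hereditariness) with quotient-closure (Corollary 2.2), realizing the sum as an epimorphic image of the stable coproduct. Everything else is the routine bookkeeping of instantiating the dual adjoint functor theorem and checking the universal property of the inclusion.
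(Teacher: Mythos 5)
Your proof is correct and takes essentially the same approach as the paper: the paper obtains this proposition precisely by combining Corollary 2.2, Lemma 2.4 and the dual of Theorem 2.7, exactly as you do. Your explicit check that the sum $S_M$ of all stable submodules is stable and satisfies the couniversal property merely fills in details the paper leaves implicit, since they are what the dualized construction in the proof of Theorem 2.7 (coproduct of the stable subobjects followed by the $(Epi,Mono)$-factorization of the induced map into $M$) produces.
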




Proposition 2.8 implies
\begin{cor} Let $\Lambda$ be left hereditary. Then
\vskip+2mm
(a) the category $\Lambda$-$Mod_{St}$ of stable modules is complete;
 \vskip+2mm
(b) the kernel of a morphism $f:A\rightarrow B$ in $\Lambda$-$Mod_{St}$ is the sum of all stable submodules of $Ker f$;
\vskip+2mm
(c) the product in $\Lambda$-$Mod_{St}$ of finite number of stable modules is their direct sum. The product in $\Lambda$-$Mod_{St}$ of infinite number of stable modules is the sum of stable submodules of their product in $\Lambda$-$Mod$.
\end{cor}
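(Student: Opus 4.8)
The plan is to obtain all three parts at once from a single categorical principle combined with the explicit coreflector $R$ furnished by Proposition 2.8. Recall that, by that proposition, the inclusion $L:\Lambda$-$Mod_{St}\rightarrow\Lambda$-$Mod$ is left adjoint to $R$, so that $R$ is a right adjoint and therefore preserves every limit that exists in $\Lambda$-$Mod$.

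First I would prove completeness together with an explicit limit formula in one step. Given any diagram $D:J\rightarrow\Lambda$-$Mod_{St}$, I would form the limit $\lim(LD)$ in the complete category $\Lambda$-$Mod$ and claim that $R(\lim(LD))$ is the limit of $D$ in $\Lambda$-$Mod_{St}$. This follows from the chain of natural isomorphisms
\[
\operatorname{Hom}(X,R(\lim(LD)))\cong\operatorname{Hom}(LX,\lim(LD))\cong\lim_J\operatorname{Hom}(LX,LD)\cong\lim_J\operatorname{Hom}(X,D),
\]
valid for every stable $X$, where the three isomorphisms use respectively the adjunction $L\dashv R$, the universal property of the limit in $\Lambda$-$Mod$, and the full faithfulness of $L$. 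This establishes (a) and yields the working rule that any limit in $\Lambda$-$Mod_{St}$ is computed by taking the corresponding limit in $\Lambda$-$Mod$ and then applying $R$.

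Next I would specialize this rule. For (b), the kernel of $f:A\rightarrow B$ is the equalizer of $f$ and the zero map; its value in $\Lambda$-$Mod$ is the ordinary submodule $\operatorname{Ker}f$, so in $\Lambda$-$Mod_{St}$ it becomes $R(\operatorname{Ker}f)$, which by the description of $R$ in Proposition 2.8 is precisely the sum of all stable submodules of $\operatorname{Ker}f$. For (c), the product of a family $\{M_i\}$ of stable modules is $R\!\left(\prod_i M_i\right)$, with $\prod_i M_i$ taken in $\Lambda$-$Mod$. In the finite case $\prod_i M_i=\bigoplus_i M_i$, which is already stable by Lemma 2.4, so $R$ acts as the identity and the product coincides with the direct sum. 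In the infinite case $\prod_i M_i$ need not be stable, so $R$ must genuinely be applied, giving the sum of all stable submodules of $\prod_i M_i$.

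The essential content is the limit formula for a coreflective subcategory; everything else is bookkeeping. The point that deserves the most care is the finite/infinite dichotomy in (c): I would invoke Lemma 2.4 to see that a finite direct sum of stable modules remains stable, so that $R$ is redundant there, whereas an infinite product of stable modules may fail to be stable, which is exactly why $R$ cannot be omitted in that case.
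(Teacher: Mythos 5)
Your proposal is correct and follows essentially the same route as the paper: the paper derives the corollary directly from Proposition 2.8 (coreflectivity of $\Lambda$-$Mod_{St}$ with the explicit coreflector $R$), and your representability argument $\operatorname{Hom}(X,R(\lim LD))\cong\lim_J\operatorname{Hom}(X,D)$ is exactly the standard principle being invoked there, with Lemma 2.4 handling the finite-product case just as intended. No gaps; the specializations to kernels and to finite/infinite products match the paper's statements precisely.
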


Corollaries 2.5 and 2.9 imply

\begin{cor}
Let $\Lambda$ be left hereditary. A morphism $f:A\rightarrow B$ of $\Lambda$-$Mod_{St}$ is a monomorphism if and only if $Ker f$ does not contain a non-trivial stable submodule.
\end{cor}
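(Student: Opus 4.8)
The plan is to deduce this from the general fact that, in an additive category possessing kernels, a morphism is a monomorphism precisely when its kernel is a zero object, and then to identify the kernel computed in $\Lambda$-$Mod_{St}$ by means of Corollary 2.9(b). By Corollary 2.5 the category $\Lambda$-$Mod_{St}$ is additive, and by Corollary 2.9(a) it is complete; in particular every morphism $f:A\rightarrow B$ of $\Lambda$-$Mod_{St}$ does have a kernel inside $\Lambda$-$Mod_{St}$.

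First I would record the elementary categorical lemma. In a preadditive category $fg=fh$ is equivalent to $f(g-h)=0$, so $f$ is a monomorphism if and only if $fg=0$ forces $g=0$; and $fg=0$ is exactly the condition that $g$ factor through the kernel $k:K\rightarrow A$ of $f$. Hence monicity of $f$ amounts to the requirement that every morphism factoring through $k$ be zero. If $f$ is monic, then $k$ itself, which satisfies $fk=0$, must be zero; comparing the two factorizations $\mathrm{id}_K$ and $0$ of the zero morphism $K\rightarrow A$ through $k$ and invoking the uniqueness in the universal property of the kernel yields $\mathrm{id}_K=0$, so $K$ is a zero object. The converse is immediate, since $K\cong 0$ makes every morphism through $k$ zero.

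Next I would invoke Corollary 2.9(b), according to which the kernel of $f$ taken in $\Lambda$-$Mod_{St}$ is the sum of all stable submodules of the module-theoretic kernel $Ker f$, that is, $R(Ker f)$ in the notation of Proposition 2.8. Combining this with the lemma, $f$ is a monomorphism of $\Lambda$-$Mod_{St}$ if and only if this sum is the zero module. Finally, since the zero module is itself stable, the sum of all stable submodules of $Ker f$ is zero exactly when $0$ is the only stable submodule of $Ker f$, i.e.\ when $Ker f$ contains no non-trivial stable submodule, which is the asserted condition.

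I do not expect a genuine obstacle, as the substance is already packaged in Corollaries 2.5 and 2.9. The only point requiring care is to keep the categorical kernel $R(Ker f)$, computed in $\Lambda$-$Mod_{St}$, distinct from the underlying module kernel $Ker f$: these differ in general, and it is precisely the former that must vanish for $f$ to be a monomorphism, so the translation between "the sum of all stable submodules vanishes" and "there is no non-trivial stable submodule" must be made explicit rather than conflated with the vanishing of $Ker f$ itself.
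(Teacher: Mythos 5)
Your proof is correct and follows essentially the same route as the paper, which simply derives the statement from Corollaries 2.5 and 2.9: additivity plus existence of kernels reduces monicity to vanishing of the kernel, and Corollary 2.9(b) identifies that kernel with the sum of all stable submodules of $Ker f$. Your careful distinction between the kernel computed in $\Lambda$-$Mod_{St}$ (namely $R(Ker f)$) and the module-theoretic kernel $Ker f$ is exactly the point the paper leaves implicit.
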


From Proposition 2.8 we immediately obtain
\begin{cor}
Let $\Lambda$ be left hereditary. A morphism $A\rightarrow B$ in $\Lambda$-$Mod_{St}$ is an epimorphism if and only if it is an epimorphism in $\Lambda$-$Mod$.
\end{cor}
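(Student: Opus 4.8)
The plan is to prove the two implications separately, the reverse one being purely formal and the forward one carrying all the content. For the direction ``epimorphism in $\Lambda$-$Mod$ $\Rightarrow$ epimorphism in $\Lambda$-$Mod_{St}$'', I would simply observe that $\Lambda$-$Mod_{St}$ is a \emph{full} subcategory of $\Lambda$-$Mod$: testing the epimorphism condition inside $\Lambda$-$Mod_{St}$ uses strictly fewer codomains than testing it in $\Lambda$-$Mod$, so any epimorphism of $\Lambda$-$Mod$ whose domain and codomain happen to be stable remains an epimorphism in $\Lambda$-$Mod_{St}$. This needs no hypothesis beyond fullness.

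For the forward direction I would invoke Proposition 2.8. Since $\Lambda$-$Mod_{St}$ is coreflective in $\Lambda$-$Mod$, the inclusion functor $L:\Lambda$-$Mod_{St}\rightarrow \Lambda$-$Mod$ is a left adjoint, its right adjoint being the coreflector $R$. A left adjoint preserves all colimits, in particular pushouts, hence cokernel pairs; and since a morphism is an epimorphism precisely when the two legs of its cokernel pair coincide, $L$ preserves epimorphisms. Consequently, if $f:A\rightarrow B$ is an epimorphism in $\Lambda$-$Mod_{St}$, then $L(f)=f$ is an epimorphism in $\Lambda$-$Mod$.

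The only point demanding care is the variance: one must be sure that coreflectivity makes the inclusion a left adjoint (so that it preserves epimorphisms) rather than a right adjoint (which would only preserve monomorphisms). I expect this bookkeeping, and not any genuine difficulty, to be the main thing to get right.

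As a self-contained alternative that avoids citing the metatheorem ``left adjoints preserve epimorphisms'', I would argue directly using Corollary 2.2. Given $f:A\rightarrow B$ an epimorphism in $\Lambda$-$Mod_{St}$, form its cokernel $p:B\twoheadrightarrow C$ in $\Lambda$-$Mod$. Because $B$ is stable and $p$ is an epimorphism, Corollary 2.2 shows that $C$ is stable, so $C$ is an object of $\Lambda$-$Mod_{St}$; the maps $p$ and $0$ from $B$ to $C$ agree after precomposition with $f$, whence $p=0$ by the assumed epimorphism in $\Lambda$-$Mod_{St}$, forcing $C=0$ and hence $f$ surjective, i.e. an epimorphism in $\Lambda$-$Mod$. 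I would present the adjoint argument as the headline, since it is what ``immediately from Proposition 2.8'' suggests, and keep this concrete version in reserve.
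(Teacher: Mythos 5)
Your headline argument is correct and is exactly the paper's route: Proposition 2.8 makes the inclusion $L:\Lambda$-$Mod_{St}\rightarrow\Lambda$-$Mod$ a \emph{left} adjoint (to the coreflector $R$), and left adjoints preserve epimorphisms — though note that your detour through cokernel pairs implicitly uses their existence in $\Lambda$-$Mod_{St}$ (guaranteed by Lemma 2.6), whereas the standard fact that left adjoints preserve epimorphisms, proved by transposing test morphisms across the adjunction, needs no colimits at all. Your reserve argument via Corollary 2.2 (form $p:B\twoheadrightarrow B/f(A)$, observe the quotient is stable, and use epimorphy in $\Lambda$-$Mod_{St}$ to force $p=0$) is also correct, and is in fact more general: it nowhere uses left hereditarity, so it proves the statement over an arbitrary ring, while the paper's (and your headline) argument needs that hypothesis only to know that $\Lambda$-$Mod_{St}$ is coreflective.
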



Dually to the notion of a stable module one can introduce the notion of a \textit{costable} module. The duals of all statements of this section hold. In particular, we have the following statement, it follows from Theorem 2.7.  
\begin{prop}
Let $\Lambda$ be a left hereditary ring. The full subcategory $\Lambda$-$Mod_{CoSt}$ of the category $\Lambda$-$Mod$ with objects being costable modules is reflective. The reflector $L'$ sends a module $M$ to the quotient $M/(\cap_{f:M\twoheadrightarrow C,\ C\ is\ costable} Ker f)$, and the components of the unit of the reflection are the canonical projections.
\end{prop}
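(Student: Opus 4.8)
The plan is to obtain the statement as a direct application of Theorem 2.7 to the category $\mathcal{C}=\Lambda$-$Mod$, the full subcategory $\mathcal{X}=\Lambda$-$Mod_{CoSt}$, and the factorization system $(\mathbb{E},\mathbb{M})=(Epi,Mono)$, in which every homomorphism is factored as the canonical projection onto its image followed by the inclusion of that image. Here $\mathbb{E}\subseteq Epi$ holds trivially, $\Lambda$-$Mod$ is complete, and it is $Epi$-co-well-powered, since the epimorphisms out of a fixed module $M$ correspond, up to the relevant equivalence, to the submodules of $M$, of which there is only a set. Thus the standing hypotheses of the theorem are satisfied, and it remains to check condition (i) for $\Lambda$-$Mod_{CoSt}$.

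Condition (i) requires that $\Lambda$-$Mod_{CoSt}$ be closed under products and under $\mathbb{M}$-subobjects, that is, under submodules. These two properties are exactly the duals of Lemma 2.4 and Corollary 2.2, and hence hold for costable modules over a left hereditary ring by the duality recorded before the statement of the proposition. Granting them, the implication (i)$\Rightarrow$(ii) of Theorem 2.7 shows at once that $\Lambda$-$Mod_{CoSt}$ is reflective and that every component of the reflection unit lies in $\mathbb{E}=Epi$, i.e. is a surjective homomorphism.

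It then remains to identify the reflector explicitly by retracing the construction used in the proof of (i)$\Rightarrow$(ii). For a module $M$ one selects a representative $e_W:M\twoheadrightarrow C_W$ of each equivalence class of epimorphisms out of $M$ with costable codomain, forms the product $A=\prod_W C_W$ (costable by the dual of Lemma 2.4), and factors the induced morphism $\varrho:M\rightarrow A$ as $M\twoheadrightarrow A'\rightarrowtail A$. The $\mathbb{E}$-part is the projection $M\twoheadrightarrow M/\ker\varrho$, and $\ker\varrho=\bigcap_W\ker e_W$. Since the $e_W$ represent all equivalence classes and equivalent epimorphisms have equal kernels, this intersection coincides with $\bigcap_{f:M\twoheadrightarrow C,\ C\ \text{costable}}\ker f$; hence $A'=M/(\bigcap_f\ker f)$ and the unit component at $M$ is the canonical projection, as asserted. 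That $A'$ is costable follows because it embeds as a submodule of the costable module $A$ and costable modules are closed under submodules.

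I expect the only substantive step to be the verification of the two closure properties, namely the duals of Corollary 2.2 and Lemma 2.4; everything else is a purely formal consequence of Theorem 2.7 and the construction in its proof. The identification of the intersection over a representative set with the intersection over all epimorphisms to costable modules is routine, because the kernel of an epimorphism depends only on its equivalence class.
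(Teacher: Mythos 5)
Your proposal is correct and takes essentially the same route as the paper: the paper obtains this proposition as a direct consequence of Theorem 2.7 (applied with $(\mathbb{E},\mathbb{M})=(Epi,Mono)$, using the duals of Corollary 2.2 and Lemma 2.4 for closure under submodules and products, which the paper asserts hold for costable modules over a left hereditary ring). Your verification of the standing hypotheses and your retracing of the (i)$\Rightarrow$(ii) construction to identify the reflector as $M\mapsto M/\bigl(\bigcap_{f}\ker f\bigr)$ with canonical projections as unit components simply spells out details the paper leaves implicit.
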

Let \begin{center}$R':\Lambda$-$Mod_{CoSt}\rightarrow \Lambda$-$Mod$\end{center} be the embedding functor.
\begin{cor}
The functor \begin{center}
$L'L:\Lambda$-$Mod_{St}\rightarrow \Lambda$-$Mod_{CoSt}$
\end{center} is a left adjoint of the functor $RR'$.
\end{cor}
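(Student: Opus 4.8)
The plan is to recognize the statement as the elementary fact that adjoint functors compose, applied to the two adjunctions already produced in this section. First I would recall that Proposition 2.8 presents $\Lambda$-$Mod_{St}$ as a coreflective subcategory, so the embedding $L$ is left adjoint to the coreflector $R$, i.e. $L\dashv R$; dually, Proposition 2.12 presents $\Lambda$-$Mod_{CoSt}$ as a reflective subcategory, so the reflector $L'$ is left adjoint to the embedding $R'$, i.e. $L'\dashv R'$. The functors to be compared are precisely the composites $L'L$ and $RR'$, and the variances line up so that these two adjunctions can be spliced: the $\Lambda$-$Mod$ appearing as codomain of $L$ and domain of $R$ in $L\dashv R$ is the same $\Lambda$-$Mod$ serving as domain of $L'$ and codomain of $R'$ in $L'\dashv R'$.

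Next I would assemble the required natural bijection of hom-sets by chaining the two adjunction isomorphisms. For a stable module $S$ and a costable module $C$, applying first $L'\dashv R'$ and then $L\dashv R$ gives
\[
\operatorname{Hom}(L'L(S),C)\;\cong\;\operatorname{Hom}(L(S),R'(C))\;\cong\;\operatorname{Hom}(S,RR'(C)),
\]
where the three hom-sets are taken in $\Lambda$-$Mod_{CoSt}$, $\Lambda$-$Mod$, and $\Lambda$-$Mod_{St}$ respectively. Each isomorphism is natural in both $S$ and $C$, hence so is their composite, and this is exactly the assertion $L'L\dashv RR'$. Equivalently, one may exhibit the unit and counit directly: writing $\eta$ for the unit of $L'\dashv R'$ (the canonical projections of Proposition 2.12) and $\varepsilon$ for the counit of $L\dashv R$ (the embeddings $R(M)\rightarrowtail M$ of Proposition 2.8), the unit of the composite at $S$ is $R(\eta_{L(S)})$ precomposed with the unit of $L\dashv R$, and the counit at $C$ is $L'(\varepsilon_{R'(C)})$ postcomposed with the counit of $L'\dashv R'$; the triangle identities for the composite then follow formally from those of the two given adjunctions.

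I do not expect any genuine obstacle here. The single point that warrants care is bookkeeping the variance: one must confirm that the coreflection supplies a \emph{left} adjoint on the stable side (the embedding $L$, since $\Lambda$-$Mod_{St}$ is coreflective) whereas the reflection supplies a \emph{left} adjoint on the costable side (the reflector $L'$, since $\Lambda$-$Mod_{CoSt}$ is reflective), so that the composites combine in the stated order $L'L\dashv RR'$ rather than in its opposite.
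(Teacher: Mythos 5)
Your proposal is correct and is exactly the argument the paper intends: the corollary is stated without proof precisely because it follows from composing the adjunctions $L\dashv R$ (Proposition 2.8, coreflectivity of $\Lambda$-$Mod_{St}$) and $L'\dashv R'$ (Proposition 2.12, reflectivity of $\Lambda$-$Mod_{CoSt}$). Your chained hom-set bijection and the unit/counit description are the standard way to make this composition explicit, with the variance bookkeeping handled correctly.
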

\vskip+5mm
\section{The stable category and the category of stable modules}
Let $\underline{\Lambda-Mod}$ be the stable category. Recall that its objects are modules, while morphisms $M\rightarrow N$ are equivalence classes of the set $Hom(M,N)$ with respect to the equivalence relation $\sim$ defined as follows: $f\sim g$ if the homomorphism $(f-g)$ factors through a projective. The equivalence class of a homomorphism $f$ is denoted by $\underline{f}$. The quotient functor $$\Lambda-Mod\rightarrow \underline{\Lambda-Mod}$$ is denoted by $Q$.

From now on, in this section $\Lambda$ \textit{is assumed to be a left hereditary ring}.
\begin{lem}(\cite{MZ}, Lemma 2.7)
Equivalent homomorphisms with a stable domain are equal.
\end{lem}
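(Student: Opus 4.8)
The plan is to reduce the statement about two equivalent maps to a statement about a single map. If $f,g:M\to N$ satisfy $f\sim g$, then by the definition of the relation $\sim$ the homomorphism $f-g$ factors through a projective module. Hence it suffices to prove the following: whenever the domain $M$ is stable, every homomorphism $h:M\to N$ that factors through a projective must be zero. Applying this to $h=f-g$ then immediately gives $f=g$, which is what the lemma asserts.

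To carry this out I would write such an $h$ as a composite $h=\beta\alpha$ with $\alpha:M\to P$ and $\beta:P\to N$, where $P$ is projective. The crucial point is that $\Lambda$ is assumed left hereditary throughout this section, so Lemma 2.1 supplies the full equivalence of the four conditions, and in particular the equivalence of the stability of $M$ with condition (iv): there is no non-trivial homomorphism from $M$ into a projective module. Since $P$ is projective and $M$ is stable, this forces $\alpha=0$, and therefore $h=\beta\alpha=0$.

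I do not expect any genuine obstacle here. The only substantive ingredient is the implication (i)$\Rightarrow$(iv) of Lemma 2.1, and this is exactly the place where left hereditariness enters: in general one has only (i)$\Leftrightarrow$(ii)$\Leftarrow$(iii)$\Leftarrow$(iv), so the passage from stability to the vanishing of maps into projectives could fail without the hereditary hypothesis. Once the difference $f-g$ has been factored through a projective, the conclusion that it vanishes on the stable module $M$ is an immediate consequence, and the proof is complete.
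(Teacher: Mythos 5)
Your proof is correct, and it is essentially the canonical argument: the paper itself does not reprove this lemma (it cites Lemma 2.7 of \cite{MZ}), and the reduction of $f\sim g$ to showing that $f-g=\beta\alpha$ vanishes via Lemma 2.1's implication (i)$\Rightarrow$(iv) — the one place left hereditariness is needed — is exactly the intended reasoning. You also correctly flag that this implication fails for general rings, which is why the lemma is stated only in the left hereditary setting.
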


Consider the restriction $Q_{St}$ of the functor $Q$ on the subcategory $\Lambda$-$Mod_{St}$. 
Lemma 3.1 implies 
\begin{lem}
The functor $Q_{St}$ is full and faithful.
\end{lem}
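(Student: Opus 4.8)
The plan is to verify fullness and faithfulness separately, treating fullness as an immediate consequence of the construction of the stable category and faithfulness as a direct application of Lemma 3.1. Recall that $Q_{St}$ sends a stable module to itself and a homomorphism $f\in Hom(M,N)$ between stable modules to its class $\underline{f}$ in $\underline{\Lambda-Mod}$.

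For fullness, I would invoke the very definition of the stable category: a morphism $M\to N$ in $\underline{\Lambda-Mod}$ is, by construction, an equivalence class $\underline{f}$ of some genuine homomorphism $f\in Hom(M,N)$. When $M$ and $N$ are both stable, this $f$ is already a morphism of $\Lambda$-$Mod_{St}$, and $Q_{St}(f)=\underline{f}$ by definition of the restriction. Thus every morphism between stable objects in the stable category lies in the image of $Q_{St}$ on hom-sets, which is exactly fullness.

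For faithfulness, I would take $f,g\in Hom(M,N)$ with $M$ stable and assume $Q_{St}(f)=Q_{St}(g)$, that is $\underline{f}=\underline{g}$. By definition of the relation $\sim$, this says that $f-g$ factors through a projective, so $f$ and $g$ are equivalent homomorphisms. Since the domain $M$ is stable, Lemma 3.1 applies verbatim and yields $f=g$, establishing that $Q_{St}$ is injective on hom-sets.

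There is essentially no obstacle here: fullness is forced by the definition of $Q$, and faithfulness reduces entirely to Lemma 3.1, whose only hypothesis is stability of the \emph{domain} (stability of the codomain is not even used). The sole point worth flagging is that faithfulness genuinely relies on this stability assumption, since equivalent but distinct homomorphisms do exist out of non-stable modules; restricting to $\Lambda$-$Mod_{St}$ is precisely what guarantees the domain is stable and makes the argument go through.
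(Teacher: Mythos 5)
Your proof is correct and follows the paper's own (very terse) argument: the paper simply notes that Lemma 3.1 implies the result, which is exactly your faithfulness step, while fullness is immediate since morphisms in $\underline{\Lambda-Mod}$ are by construction classes $\underline{f}$ of actual homomorphisms and $\Lambda$-$Mod_{St}$ is a full subcategory. Your explicit unpacking of both halves, including the observation that only stability of the domain is needed, is a faithful elaboration of the paper's one-line proof.
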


Further, note that there is a functor $$R_{St}:\underline{\Lambda-Mod}\rightarrow \Lambda-Mod_{St}$$ such that the following triangle is commutative:
\begin{equation}
\xymatrix{\Lambda-Mod_{St}& \Lambda-Mod\ar[dl]^{Q}\ar[l]_{R}\\
\underline{\Lambda-Mod}\ar[u]^{R_{St}}}
\end{equation}
\vskip+2mm
\noindent The functor $R_{St}$ acts on objects in the same way as $R$ does. Its action on morphisms coincides with that of $R$ on their representatives. We only need to show that if $f,g:M\rightarrow N$ are arbitrary homomorphisms with $f\sim g$, then $f\mid_{R(M)}=g\mid_{R(M)}$. But this follows from the fact that $R(M)$ is stable and Lemma 3.1. 
\begin{prop}
The functor $R_{St}$ is a right adjoint to the functor $Q_{St}$.
\end{prop}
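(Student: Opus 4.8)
The plan is to exhibit, for every stable module $S$ and every object $X$ of $\underline{\Lambda-Mod}$ (i.e. every module $X$), a bijection
$$\underline{\Lambda-Mod}\bigl(Q_{St}(S),X\bigr)\ \cong\ \Lambda\text{-}Mod_{St}\bigl(S,R_{St}(X)\bigr)$$
natural in $S$ and $X$, and then to read off the unit and counit. I would build this bijection as a composite of two elementary ones. First, since $Q_{St}(S)=S$ and $S$ is stable, Lemma 3.1 says that two homomorphisms $S\to X$ which are equivalent must coincide; as every morphism of the stable category is represented by a homomorphism, the assignment $f\mapsto\underline{f}$ is therefore a bijection $Hom_{\Lambda\text{-}Mod}(S,X)\xrightarrow{\ \sim\ }\underline{\Lambda-Mod}(S,X)$.

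Second, I would invoke the coreflection of Proposition 2.8: the inclusion of $\Lambda\text{-}Mod_{St}$ is left adjoint to $R$, with counit component the embedding $\varepsilon_X\colon R(X)\rightarrowtail X$. Concretely, the image of the stable module $S$ under any $f\colon S\to X$ is stable by Corollary 2.2, hence a stable submodule of $X$ and so contained in $R(X)$; thus $f$ factors uniquely as $f=\varepsilon_X\bar f$ with $\bar f\colon S\to R(X)$, and $f\mapsto\bar f$ is the coreflection bijection $Hom_{\Lambda\text{-}Mod}(S,X)\cong\Lambda\text{-}Mod_{St}(S,R(X))$. Since $R_{St}(X)=R(X)$, composing the two bijections yields the asserted one. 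Tracing it through, the unit $\eta_S\colon S\to R_{St}Q_{St}(S)$ is the identity (because $R(S)=S$ for stable $S$) and the counit $\epsilon_X\colon Q_{St}R_{St}(X)\to X$ is $\underline{\varepsilon_X}$; the triangle identities hold at once, since $R$ fixes stable submodules so that $R(\varepsilon_X)=\mathrm{id}_{R(X)}$ and $\varepsilon_S=\mathrm{id}_S$.

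The one step that genuinely needs care is naturality in the variable $X$, whose hom-sets consist of equivalence classes. Given $\underline{u}\colon X\to X'$ I must check that the composite bijection carries post-composition with $\underline{u}$ to post-composition with $R_{St}(\underline{u})=R(u)$, independently of the chosen representative $u$. Here I would use two facts already available: the well-definedness of $R_{St}$ (established just before the proposition), which makes $R(u)$ insensitive to the representative, and Lemma 3.1, which guarantees that $u\sim u'$ forces $uf=u'f$ for the stable domain $S$, so the left-hand side is unambiguous too. With both sides unambiguous, the required identity $R(u)\bar f=\overline{uf}$ follows from the naturality of the counit $\varepsilon$ (equivalently, the naturality of the coreflection $R$) together with the fact that $\varepsilon_{X'}$ is monic, while naturality in $S$ is routine. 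I expect this representative-independence to be the main, though mild, obstacle; everything else is bookkeeping on top of Lemmas 3.1--3.2 and Proposition 2.8.
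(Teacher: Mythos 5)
Your proof is correct, but it takes a somewhat different route from the paper's. The paper disposes of the proposition in two lines: it verifies that $\underline{\varepsilon_M}$ is couniversal from $Q_{St}$ to $M$, where the existence of the factorization is the same observation you make (the image of a stable module under any homomorphism is stable by Corollary 2.2, hence lands in $R(M)$), but the uniqueness part is obtained by invoking Theorem 4.10 of \cite{MZ} --- the functor $Q$ preserves monomorphisms --- so that $\underline{\varepsilon_M}$ is itself a monomorphism of the stable category, after which faithfulness of $Q_{St}$ (Lemma 3.2) finishes the argument. You never need that external input: in your composite bijection, injectivity comes from Lemma 3.1 (equivalent homomorphisms with stable domain are equal) together with the fact that $\varepsilon_X$ is a monomorphism already in $\Lambda$-$Mod$; in other words, you use monicity \emph{before} passing to the stable category rather than after. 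This makes your argument self-contained relative to the results established in the present paper, at the price of the explicit unit/counit and naturality bookkeeping, which you handle correctly: representative-independence on both sides is exactly Lemma 3.1 plus the well-definedness of $R_{St}$ noted before the proposition, and the key identity $R(u)\bar f=\overline{uf}$ does follow from naturality of $\varepsilon$ in $\Lambda$-$Mod$ and monicity of $\varepsilon_{X'}$. Both proofs ultimately exhibit $\underline{\varepsilon}$ as the counit of the adjunction, a fact the paper relies on later (in the proof of Theorem 5.2, equivalence (x)$\Leftrightarrow$(xi)).
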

\begin{proof}
By Theorem 4.10 of \cite{MZ}, the functor $Q$ preserves monomorphisms. Applying this fact one can easily verify that $\underline{\varepsilon_M}$ is couniversal from the functor $Q_{St}$ to $M$, for any module $M$.
\end{proof}
\begin{cor}
If the functor $Q$ has a left adjoint, then the latter one is identical on stable modules (up to an isomorphism).
\end{cor}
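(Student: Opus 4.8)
The plan is to show that a hypothetical left adjoint $F$ of $Q$ agrees with the embedding $L\colon\Lambda$-$Mod_{St}\to\Lambda$-$Mod$ after precomposition with $Q_{St}$, and to do so purely formally, by exhibiting $F\circ Q_{St}$ and $L$ as left adjoints of one and the same functor. Thus $F$ need not be described explicitly; it is automatically pinned down on stable modules by the adjunctions already at hand.

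First I would collect the two adjunctions already established. Proposition 3.3 provides $Q_{St}\dashv R_{St}$, and the coreflectivity of $\Lambda$-$Mod_{St}$ asserted in Proposition 2.8 provides $L\dashv R$, the embedding $L$ being left adjoint to the coreflector $R$ (whose counit components are the embeddings $R(M)\rightarrowtail M$).

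Next, writing $F$ for a left adjoint of $Q$ (so $F\dashv Q$) and composing it with the adjunction $Q_{St}\dashv R_{St}$, the usual composition law for adjoints yields $F\circ Q_{St}\dashv R_{St}\circ Q$. Here the crucial simplification is the commutativity of the triangle (3.1), which says precisely that $R_{St}\circ Q=R$; hence $F\circ Q_{St}\dashv R$. Since $L\dashv R$ as well, the essential uniqueness of left adjoints furnishes a natural isomorphism $F\circ Q_{St}\cong L$.

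Finally I would read off the statement on objects. The functors $Q$, $Q_{St}$ and $L$ are all the identity on objects, so for a stable module $M$ one has $F(M)=(F\circ Q_{St})(M)\cong L(M)=M$, which is exactly the claim that $F$ is, up to isomorphism, the identity on stable modules. There is no genuine obstacle here: the only point requiring care is the bookkeeping of which functor plays the role of left and which of right adjoint in the composite, after which the commutative triangle (3.1) does all the work by collapsing the composite right adjoint $R_{St}\circ Q$ to $R$.
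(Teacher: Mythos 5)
Your proof is correct and follows essentially the same route as the paper: the paper's (terse) argument is precisely that the composite $F\circ Q_{St}$ is a left adjoint of $R$ — implicitly via the composition of the adjunctions $F\dashv Q$ and $Q_{St}\dashv R_{St}$ together with the identity $R_{St}\circ Q=R$ from triangle (3.1) — and hence isomorphic to $L$ by uniqueness of left adjoints. You have merely spelled out the bookkeeping that the paper leaves implicit.
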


\begin{proof}
If a left adjoint of $Q$ exists, then the composition of $Q_{St}$ with it is a left adjoint of $R$, and hence is isomorphic to $L$.
\end{proof}

\section{Some conditions on a ring}
Let $\Lambda$ be an arbitrary ring. For the main result of the paper, we need the following 
\begin{lem}
For the following conditions, we have the implications:
\begin{center}
(i)$\Leftarrow$(ii)$\Leftrightarrow$(iii)$\Leftarrow$(iv)$\Leftrightarrow$(v)$\Leftrightarrow$(vi)$\Leftrightarrow$(vii)$\Leftrightarrow$(viii).
\end{center}
\vskip+2mm
\noindent If $\Lambda$ is left hereditary, then the conditions (i)-(viii) are equivalent:
\vskip+2mm
(i) there is a module embedding $\Lambda\rightarrowtail S$ with a stable module $S$;
\vskip+2mm
(ii) any free module can be embedded into a stable module;
\vskip+2mm
(iii) any module can be embedded into a stable module;
\vskip+2mm
(iv) any injective module is stable;
\vskip+2mm
(v) there is no a non-trivial projective injective module;
\vskip+2mm
(vi) any projective module is costable;
\vskip+2mm
(vii) for any module $M$, there is an epimorphism $C\twoheadrightarrow M$ with a costable module $C$;
\vskip+2mm
(viii) for any free module $M$, there is an epimorphism $C\twoheadrightarrow M$ with a costable module $C$;
\vskip+2mm

\end{lem}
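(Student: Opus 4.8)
The plan is to organize the eight conditions into two clusters joined by a single genuinely ring-theoretic bridge. Conditions (iv)--(viii) form a cycle that I expect to close by purely formal arguments valid over an arbitrary ring, relying only on the standard facts that a direct summand of an injective (resp.\ projective) module is injective (resp.\ projective), that every projective module is a direct summand of a free one, that an injective submodule always splits off, and that every module is a quotient of a free module. Two closure properties will be used throughout, and both hold over any ring and follow immediately from the definitions: a direct summand of a stable module is stable, and a direct summand of a costable module (i.e.\ one with no non-zero injective summand) is costable. I would record these at the outset.

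For the cycle I proceed as follows. For (iv)$\Leftrightarrow$(v): if every injective is stable, then a non-zero projective injective module, being stable, would contain itself as a non-zero projective summand, which is impossible; conversely a non-zero projective summand of an injective module is itself injective, so (v) forces every injective to be stable. For (v)$\Leftrightarrow$(vi): a non-zero injective summand of a projective module is projective injective, and a non-zero projective injective module is an injective summand of itself, so the two conditions fail together. For (vi)$\Rightarrow$(vii) I take a free cover $F\twoheadrightarrow M$, noting that $F$ is projective and hence costable; (vii)$\Rightarrow$(viii) is the restriction to free $M$; and for (viii)$\Rightarrow$(vi), given a projective $P$ I write it as a summand of a free $F$, take a costable epimorphism $C\twoheadrightarrow F$ provided by (viii), and compose with the projection onto $P$, which splits because $P$ is projective, exhibiting $P$ as a summand of the costable module $C$.

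Next I attach (ii) and (iii). The implication (iii)$\Rightarrow$(ii) is immediate, and (iv)$\Rightarrow$(iii) follows by embedding a module into its injective hull. Conversely (iii)$\Rightarrow$(iv): an injective $I$ embeds into a stable module by (iii), the embedding splits, and a summand of a stable module is stable; so (iii) and (iv) are equivalent over any ring. To bring (ii) into the cluster I prove (ii)$\Rightarrow$(v) directly: a putative non-zero projective injective $P$ sits as a summand of a free module, which (ii) embeds into a stable $S$; since $P$ is injective it splits off $S$, contradicting stability. (Alternatively, (ii)$\Rightarrow$(iii) can be obtained directly by pushing the embedding supplied by (ii) out along a free cover of the given module and invoking Corollary 2.2.) Finally (ii)$\Rightarrow$(i) is trivial because $\Lambda$ is free. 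At this point (ii)--(viii) are already mutually equivalent over an arbitrary ring.

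The one implication that closes the entire diagram into an equivalence is (i)$\Rightarrow$(ii), and this is the only place where the proof needs $\Lambda$ to be left hereditary, so I expect it to be the main obstacle. Given an embedding $\Lambda\rightarrowtail S$ with $S$ stable, the free module $\Lambda^{(X)}$ embeds into $S^{(X)}$, and the whole point is that this latter direct sum is again stable --- which is precisely the content of Lemma 2.4 and is not available when $\Lambda$ is not left hereditary. Every other implication is soft, resting on the splitting of injectives and projectives and on the summand-closure of stability and costability, whereas (i)$\Rightarrow$(ii) hinges on the genuinely non-formal fact that over a left hereditary ring an arbitrary direct sum of stable modules stays stable. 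The remaining work --- checking that the summand-closure statements are applied on the correct side, and that the optional pushout behaves as claimed --- is routine.
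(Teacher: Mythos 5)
Your proof is correct and covers every implication the lemma asserts; for most steps it runs along the same lines as the paper's proof (the cycle (iv)$\Leftrightarrow$(v)$\Leftrightarrow$(vi)$\Leftrightarrow$(vii)$\Leftrightarrow$(viii) by summand/splitting arguments, (ii)$\Rightarrow$(iii) by pushing the embedding of a free cover out along the quotient and invoking Corollary 2.2, and (i)$\Rightarrow$(ii) from Lemma 2.4, which is the genuinely hereditary step in both treatments). The real difference is in the bookkeeping of the hereditary hypothesis. The paper invokes Lemma 2.4 twice --- for (i)$\Rightarrow$(ii) and again for (iii)$\Rightarrow$(iv) (``$I$ is a direct summand of $S$; by Lemma 2.4 it is stable'') --- so in the paper both of these implications sit on the hereditary side, and the general-ring statement claims only (ii)$\Leftrightarrow$(iii)$\Leftarrow$(iv). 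You instead note that a direct summand of a stable module is stable over an \emph{arbitrary} ring (if $S=I\oplus J$ and $I=P\oplus Q$ with $P\neq 0$ projective, then $S=P\oplus(Q\oplus J)$), which makes (iii)$\Rightarrow$(iv) hypothesis-free; together with your extra bridge (ii)$\Rightarrow$(v), this shows that (ii)--(viii) are mutually equivalent over any ring, with left heredity needed solely for (i)$\Rightarrow$(ii), where the non-trivial direction of Lemma 2.4 (closure of stability under arbitrary direct sums) is indispensable. This is a correct sharpening of the lemma, not a contradiction of it: the statement simply refrains from asserting (iii)$\Rightarrow$(iv) in general, even though the paper's own argument there uses only the trivial direction of Lemma 2.4, which never needed the hereditary hypothesis. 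What your organization buys is a cleaner localization of the hypothesis to a single implication; what the paper's buys is brevity, since citing Lemma 2.4 wholesale avoids re-proving the summand-closure fact.
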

\begin{proof}
The equivalence (ii)$\Leftrightarrow$(iii) follows from Corollary 2.3.


(v)$\Rightarrow$(iv): Let $I$ be an injective module and $P$ be its projective direct summand. Obviously $P$ is injective, and hence is $\lbrace 0\rbrace$. The proof of the implication (v)$\Rightarrow$(vi) is similar.

The implication (vii)$\Rightarrow$(vi) follows from the fact that any submodule of a costable module is costable (which is dual of Corollary 2.2).

All other implications are obvious. 

Let now $\Lambda$ be left hereditary. The implication (i)$\Rightarrow$(ii) follows from Lemma 2.4. 

(iii)$\Rightarrow$(iv): Let $I$ be an injective module and $I\rightarrowtail S$ be a monomorphism with a stable module $S$. Then $I$ is a direct summand of $S$. By Lemma 2.4 it is stable.\vskip+1mm

\end{proof}

\begin{exmp}
(a) Any integral domain that is not a field satisfies the condition (v) of Lemma 4.1 \cite{R}.


(b) A primary algebra over a field with radical square zero and having vector space dimension greater than two satisfies the condition (v) of Lemma 4.1 \cite{J}.

(c) A left hereditary ring $\Lambda$ such that the injective envelope of $_{\Lambda}\Lambda$, considered as a left module over itself, is projective obviously does not satisfy the condition (v) of Lemma 4.1. Such rings were characterized by R. R Colby and E. A. Rutter in \cite{CR}. In \cite{MZ} we proved that a left hereditary ring is of this kind if and only if the stable category of this ring is Abelian.
\end{exmp}
 \section{The main theorem}
 \begin{lem}
Let $\Lambda$ be left hereditary, and $M$ be a module. The conditions (i)-(iii) below are equivalent and imply the condition (iv).  If $f:M\rightarrowtail N$ is a monomorphism with a stable codomain $N$, then the conditions (i)-(v) are equivalent:
\vskip+2mm
(i) there is an epimorphism $\alpha:M\twoheadrightarrow P$ which is universal from $M$ to the full subcategory of projective modules (i.e., $\alpha$ is such that for any $\alpha':M\rightarrow P'$ with projective $P'$, there is (a unique) $\beta:P\rightarrow P'$ such that $\beta\alpha=\alpha'$);
\vskip+2mm
(ii) there is a representation $M=K\oplus P$ with submodules $K$ and $P$ such that $P$ is projective and if $M=K'\oplus P'$ for submodules $K'$ and $P'$ of $M$ with projective $P'$, then $K\subseteq K'$ and there is a projective submodule $P''$ of $P$  such that $P=P'\oplus P''$ and $K'=K\oplus P''$;
\vskip+2mm
(iii) there is a representation $M=K\oplus P$ with submodules $K$ and $P$ such that $P$ is projective and  if $M=K'\oplus P'$ for submodules $K'$ and $P'$ of $M$ with projective $P'$, then $K\subseteq K'$;\vskip+2mm
(iv) the morphism $\underline{g}$ of the stable category has a cokernel, for any homomorphism $g:M\rightarrow N'$ with a stable codomain $N'$. \vskip+2mm
(v) the morphism $\underline{f}$ of the stable category has a cokernel.
\vskip+2mm
If the conditions (i)-(iii) are satisfied, then the cokernel of $\underline{f}$ is the morphism $\underline{\sigma}$, where $\sigma$ is the pushout of the canonical projection $\pi:K\oplus P\twoheadrightarrow P$ of the representation from the condition (iii) (which is a universal arrow from the module $M=K\oplus P$ to the subcategory of projective modules), along the monomorphism $f$:
\begin{equation}
\xymatrix{K\oplus P\ar@{ >->}[r]^{f}\ar@{->>}[d]_{\pi}&N\ar@{->>}[d]^{\sigma}\\
P\ar@{ >->}[r]&D
}
\end{equation}\vskip+2mm

\end{lem}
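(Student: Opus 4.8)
The plan is to reduce the three conditions (i)--(iii) to a single structural statement: that $M$ admits a decomposition $M=K\oplus P$ with $K$ stable and $P$ projective. The engine throughout is Lemma 2.1(iv), that there is no nonzero homomorphism from a stable module into a projective one. Given a universal arrow $\alpha:M\twoheadrightarrow P$ as in (i), projectivity of $P$ splits $\alpha$, so $M=\ker\alpha\oplus P_1$; and if $\ker\alpha$ had a nonzero projective summand $Q_1$, the projection $M\to Q_1$ would have to factor through $\alpha$, which is impossible since $\alpha$ kills $\ker\alpha$. Hence $K:=\ker\alpha$ is stable. Conversely, from a stable-complement decomposition $M=K\oplus P$ the projection $\pi:M\twoheadrightarrow P$ is universal, because any $\alpha':M\to P'$ into a projective vanishes on the stable summand $K$ and therefore factors through $\pi$. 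The minimality in (iii) and the refinement in (ii) then follow by comparing two decompositions $M=K\oplus P=K'\oplus P'$: the composite $K\hookrightarrow M\twoheadrightarrow P'$ is zero, whence $K\subseteq K'$, and putting $P''=K'\cap P$ (projective, since $\Lambda$ is hereditary) gives $K'=K\oplus P''$ and $P\cong P''\oplus P'$. This settles (i)$\Leftrightarrow$(ii)$\Leftrightarrow$(iii).

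For (i)--(iii)$\Rightarrow$(iv) I would first note that every projective module is a zero object of $\underline{\Lambda-Mod}$, so the inclusion and projection of $M=K\oplus P$ become mutually inverse isomorphisms $\underline{M}\cong\underline{K}$; hence for any $g:M\to N'$ with $N'$ stable, the cokernel of $\underline{g}$ coincides with that of $\underline{g\mid_K}:K\to N'$. I claim this cokernel is $\underline{\sigma}$, where $\sigma:N'\twoheadrightarrow N'/g(K)$ is the canonical projection --- precisely the map realized by the pushout (5.2) of $\pi$ along $f$ when $g=f$, where $\ker\sigma=f(K)$. Since $K$ is stable and $g(K)$, being a quotient of $K$, is stable by Corollary 2.2, every homomorphism out of $K$ or $g(K)$ into a projective vanishes. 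This upgrades each ``factors through a projective'' hypothesis into genuine vanishing: $\underline{h}\,\underline{g\mid_K}=0$ forces $h$ to vanish on $g(K)=\ker\sigma$, so $h$ factors through $\sigma$, giving existence; and if $e\sigma$ factors through a projective via $\gamma':N'\to Q'$, then $\gamma'$ vanishes on $\ker\sigma=g(K)$, hence factors through $\sigma$, so $e$ itself factors through a projective. Thus $\underline{\sigma}$ is epic in the stable category and the factorization is unique. This simultaneously establishes the explicit cokernel displayed in the statement.

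The implication (iv)$\Rightarrow$(v) is immediate, as $f$ is a homomorphism with stable codomain $N$. The real content --- and the step I expect to be the main obstacle --- is the converse (v)$\Rightarrow$(i), where I must manufacture a projective summand of $M$ from the bare existence of a cokernel $\underline{c}:N\to C$. From $\underline{c}\,\underline{f}=0$ I obtain a factorization $cf=\beta\gamma$ with $\gamma:M\to Q$ into a projective; replacing $Q$ by $\gamma(M)$ (projective, as $\Lambda$ is hereditary) I may take $\gamma$ epic, so $M=\ker\gamma\oplus P_0$ with $P_0$ projective. It remains to prove $K:=\ker\gamma$ is stable. Suppose not, say $\ker\gamma=K_1\oplus Q_1$ with $Q_1\neq 0$ projective, and let $\alpha:M\twoheadrightarrow Q_1\oplus P_0$ be the corresponding projection. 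I would form the pushout of $\alpha$ along $f$, with legs $\sigma_\alpha:N\to D_\alpha$ and $\iota_\alpha:Q_1\oplus P_0\rightarrowtail D_\alpha$, the latter monic since pushouts preserve monomorphisms in an abelian category. As $\underline{\sigma_\alpha}\,\underline{f}=0$, the cokernel property gives $\underline{\sigma_\alpha}=\underline{t}\,\underline{c}$; and here the stability of $N$ intervenes decisively, for the correcting homotopy is a map out of the stable module $N$ into a projective and hence zero, so in fact $\sigma_\alpha=tc$ exactly. Then $\iota_\alpha\alpha=\sigma_\alpha f=tcf=t\beta\gamma$ vanishes on $Q_1\subseteq\ker\gamma$, while $\alpha$ restricts to the identity on $Q_1$; injectivity of $\iota_\alpha$ forces $Q_1=0$, a contradiction. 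Hence $K$ is stable and (i) holds.

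The subtle points on which I would spend the most care are twofold: first, the two distinct uses of stability --- $K$ and its quotients in the forward direction, $N$ in the converse --- each serving, via Lemma 2.1(iv), to convert ``factors through a projective'' into strict vanishing or strict commutativity; and second, the abelian-category fact that the pushout of the monomorphism $f$ stays monic, which is exactly what drives the contradiction in (v)$\Rightarrow$(i). Finally, the identification $D\cong N/f(K)$ of the pushout in (5.2), with $\ker\sigma=f(K)$ and $N/f(K)$ stable by Corollary 2.2, matches the explicit cokernel asserted at the end of the statement.
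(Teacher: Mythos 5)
Most of your proposal is sound, and it runs on the same fuel as the paper's proof: Lemma 2.1 (over a hereditary ring, stable means no nonzero maps into projectives), heredity to make images of maps into projectives split off as summands, stability of the codomain $N$ to turn stable-category identities into strict equalities (Lemma 3.1), and monicity of pushouts of the mono $f$. Your reduction ``cokernel of $\underline{g}$ $=$ cokernel of $\underline{g\mid_K}$ $=$ $\underline{\sigma}$ with $\sigma:N'\twoheadrightarrow N'/g(K)$'' is correct and even a little stronger than the paper's argument for (iii)$\Rightarrow$(iv) (it shows $\underline{\sigma}$ is epic in the stable category and anticipates the paper's Proposition 5.6), and your proof of (v)$\Rightarrow$(i) by splitting a hypothetical projective summand off $\ker\gamma$ and contradicting injectivity of the pushout leg is a legitimate variant of the paper's direct verification of universality.

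The genuine gap is in your first paragraph: you never prove any implication \emph{out of} (ii) or (iii). Your scheme gives (i)$\Leftrightarrow$[structural: $M=K\oplus P$ with $K$ stable, $P$ projective], then [structural]$\Rightarrow$(ii)$\Rightarrow$(iii); so the sentence ``This settles (i)$\Leftrightarrow$(ii)$\Leftrightarrow$(iii)'' is not justified, because nothing shows that a decomposition $M=K\oplus P$ satisfying only the minimality in (iii) has stable $K$ (equivalently, that its projection $M\twoheadrightarrow P$ is universal). This direction is really used later: (viii)$\Rightarrow$(v) of Theorem 5.2 rests on exactly this implication, and your final identification of the cokernel also needs it, since your cokernel argument invokes stability of the $K$ appearing in (iii). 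The repair is short inside your framework: if the $K$ of a (iii)-representation were not stable, write $K=K_1\oplus Q$ with $Q\neq 0$ projective; then $M=K_1\oplus(Q\oplus P)$ is another representation with projective complement, and minimality forces $K\subseteq K_1\subsetneq K$, a contradiction. (The paper instead argues directly: for any $\alpha':M\rightarrow P'$ with $P'$ projective, heredity gives $M=\ker\alpha'\oplus\alpha'(M)$ with $\alpha'(M)$ projective, so minimality yields $K\subseteq\ker\alpha'$ and $\alpha'$ factors through the projection $M\twoheadrightarrow P$.) With that one step added, your proof is complete.
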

\begin{proof}
(i)$\Rightarrow$(ii): Consider the representation $M=K\oplus P$ induced by the morphism $\alpha$. For a representation $M=K'\oplus P'$, let $\alpha':M=K'\oplus P'\twoheadrightarrow P'$ be the canonical projection. There is a homomorphism $\beta:P\rightarrow P'$ with $\beta\alpha=\alpha'$. This implies that $K$ is a submodule of $K'$. Moreover, the homomorphism $\beta$ is an epimorphism, and hence $P=P'\oplus P''$ for the submodule $P''=Ker\ \beta$, which is projective. Let $i$ be the canonical embedding $P\rightarrowtail K\oplus P$. Then $\alpha'i=\beta$. Therefore $Ker\ \beta$ is a submodule of $K'$. Since we have $K\oplus P''\oplus P'=K'\oplus P'$, and $K\oplus P''$ is a submodule of $K'$, we conclude that $K'=K\oplus P''$.

(ii)$\Rightarrow$(iii) is obvious.

(iii)$\Rightarrow$(i) Let $\alpha$ be the canonical projection $K\oplus P\twoheadrightarrow P$. Consider an arbitrary homomorphism $\alpha':M\rightarrow P'$ with projective $P'$. Since the ring is left hereditary, $\alpha'(M)$ is projective, and hence $M=Ker\ \alpha'\oplus \alpha'(M)$. Therefore $K\subseteq Ker\ \alpha'$. This implies the existence of a homomorphism $\beta$ with $\beta\alpha=\alpha'$.\vskip+1mm

(iii)$\Rightarrow$(iv): We will prove the existence of a cokernel for the morphism $\underline{f}$. But the arguments obviously work for an arbitrary $\underline{g}$ with a stable codomain. Consider pushout (5.1). We will show that $\underline{\sigma}$ is a cokernel of $\underline{f}$. Assume $\underline{\sigma' f}=0$, for some $\sigma':N\rightarrow D'$. Then there is a commutative diagram 
\begin{equation}
\xymatrix{&M\ar@{ >->}[r]^{f}\ar@{->>}[d]_{\alpha}\ar[ddl]_{\alpha'}&N\ar[d]^{\sigma}\ar[ddr]^{\sigma'}\\
&P\ar[r]^{\beta}\ar@{-->}[dl]^{\delta}&D\ar@{-->}[dr]_{\varphi}\\
P'\ar[rrr]^{\beta'}&&&D'}
\end{equation}
for some projective $P'$ and some homomorphisms $\alpha'$, $\beta'$, $\delta$ and $\varphi$. The existence of $\delta$ follows from the fact that the image of $M$ under $\alpha'$ is projective, and hence is a direct summand of $M$, while the existence of $\varphi$ follows from the fact that the internal square of diagram (5.2) is a pushout. If $\varphi':D\rightarrow D'$ is such that $\underline{\varphi'\sigma}=\underline{\sigma'}$, then, since $N$ is stable, $\varphi'\sigma=\sigma'$. Since $\sigma$ is an epimorphism, we can conclude that $\varphi'=\varphi$. 

The implication (iv)$\Rightarrow$(v) is obvious.

(v)$\Rightarrow$(i): Let $\underline{\sigma}$ be a cokernel of $\underline{f}$ (in the stable category), for some $\sigma:N\rightarrow D$. Since $\underline{f\sigma}=0$, there is a commutative square 
\begin{equation}
\xymatrix{M\ar@{ >->}[r]^{f}\ar@{->>}[d]_{\alpha}&N\ar[d]^{\sigma}\\
P\ar[r]&D
}
\end{equation}
\noindent  with  projective $P$ and an epimorphism $\alpha$. 

Let $\alpha':M\rightarrow P'$ be a homomorphism with projective $P'$. Consider the pushout $\sigma'$ of $\alpha'$ along $f$ (see diagram (5.2)). 
Obviously $\underline{f\sigma'}=0$. Hence there is a morphism $\varphi:D\rightarrow D'$ such that $\underline{\sigma'}=\underline{\varphi\sigma}$. Since $N$ is stable, this implies that $\sigma'=\varphi\sigma$. The homomorphism $\beta'$, being a pushout of a monomorphism (in the Abelian category $\Lambda$-$Mod$), is a monomorphism. Since the pair of morphism classes $(Epi, Mono)$  is a factorization system in the category of $\Lambda$-modules, there is a morphism $\delta:P\rightarrow P'$ making diagram (5.2) commutative. This, together with the fact that $\alpha$ is an epimorphism, implies that it is universal from $M$ to the subcategory of projective modules.\end{proof}

\begin{theo}
For a ring $\Lambda$, the following conditions are equivalent:
\vskip+2mm

(i) the ring $\Lambda$ is left hereditary, left perfect and right coherent;\vskip+2mm

(ii) the ring $\Lambda$ is left hereditary and the direct product of arbitrary family of projective modules is projective;\vskip+2mm

(iii) the functor $Q$ preserves limits;\vskip+2mm

(iv) the functor $Q$ has a left adjoint;\vskip+2mm

(v) the full subcategory of projective modules of $\Lambda$-$Mod$ is epireflective;
\vskip+2mm 

(vi) the ring $\Lambda$ is left hereditary and the full subcategory of projective modules of $\Lambda$-$Mod$ is reflective;
\vskip+2mm 

(vii) the ring $\Lambda$ is left hereditary and for any module $M$, there is a representation $M=K\oplus P$ with submodules $K$ and $P$ such that $P$ is projective and if $M=K'\oplus P'$ for submodules $K'$ and $P'$ of $M$ with projective $P'$, then $K\subseteq K'$ and there is a projective submodule $P''$ of $P$  such that $P=P'\oplus P''$ and $K'=K\oplus P''$;
\vskip+2mm

(viii) the ring $\Lambda$ is left hereditary and for any module $M$, there is a representation $M=K\oplus P$ with submodules $K$ and $P$ such that $P$ is projective and if $M=K'\oplus P'$ for submodules $K'$ and $P'$ of $M$ with projective $P'$, then $K\subseteq K'$;
\vskip+2mm

(ix) the ring $\Lambda$ is left hereditary and any module is isomorphic to some stable module in the stable category;\vskip+2mm

(x) the ring $\Lambda$ is left hereditary and the functors $R_{St}$ and $Q_{St}$ are equivalences of categories;\vskip+2mm

(xi) the ring $\Lambda$ is left hereditary and for any module $M$, the morphism $\underline{\varepsilon_M}:R(M)\rightarrowtail M$ of the stable category is an isomorphism;\vskip+2mm

(xii) the ring $\Lambda$ is left hereditary and for any module $M$, there exists the representation $M=N\oplus P$ such that $P$ is  projective and $N$ is a submodule of $R(M)$;\vskip+2mm

(xiii) the ring $\Lambda$ is left hereditary and for any module $M$, there are projective modules $P$ and $P'$ and an isomorphism $\delta:R(M)\oplus P\rightarrow M\oplus P'$ such that $\varepsilon_M=\pi'\delta i$, where $i$ and $\pi'$ are resp. the canonical embedding and the canonical projection:
\begin{equation}
\xymatrix{R(M)\oplus P\ar[r]_{\delta}^{\approx}&M\oplus P'\ar@{->>}[d]^{\pi'}\\
R(M)\ar@{ >->}[r]_{\varepsilon_M}\ar@{ >->}[u]^{i}&M}
\end{equation} 
\vskip+2mm

\end{theo}
\begin{proof} The equivalence of the conditions (i)$\Leftrightarrow$(ii) is the well-known result by Chase \cite{C}. The equivalence of the conditions (ii)-(iv) is proved in \cite{MZ}. \vskip+1mm

The equivalences (ii)$\Leftrightarrow$(v)$\Leftrightarrow$(vi) follow from Theorem 2.7 (take $\mathbb{E}=Epi$ and $\mathbb{M}=Mono$). 

The equivalences (v)$\Leftrightarrow$(vii)$\Leftrightarrow$(viii) follow Lemma 5.1 (since we already know that the condition (v) implies that the ring $\Lambda$ is left hereditary). \vskip+1mm

The implication (viii)$\Rightarrow$(ix) follows from the fact that the module $K$ in the condition (viii) is stable.\vskip+1mm

The implication (ix)$\Rightarrow$(x) follows from Lemma 3.2 and Proposition 3.3.\vskip+1mm

The implication (x)$\Rightarrow$(iv) follows from the commutativity of diagram (3.1).\vskip+1mm

The equivalence (x)$\Leftrightarrow$(xi) follows from Lemma 3.2 and the fact that $\underline{\varepsilon}$ is the counit of the adjunction $Q_{St}\dashv R_{St}$.

The equivalence of the conditions (xi) and (xii) follows from Proposition 5.12 of \cite{MZ}, while the equivalence of (xi) and (xiii) follows from the well-known Heller's criterion \cite{H}.\vskip+1mm

\end{proof}

In \cite{MZ} we showed that if a ring is left hereditary, left perfect and right coherent, then the stable category is complete. We also showed that the stable category has arbitrary coproducts (for an arbitrary ring; see Proposition 3.5 of \cite{MZ}). Theorem 5.2 implies
\begin{theo}
Let $\Lambda$ be left hereditary. For the following conditions we have (i)$\Rightarrow$(ii)$\Leftrightarrow$(iii). Under the equivalent conditions of Lemma 4.1, all three conditions below are equivalent:\vskip+2mm

(i) the ring $\Lambda$ is left perfect and right coherent;\vskip+2mm

(ii) the stable category of the ring $\Lambda$ has cokernels;\vskip+2mm

(iii) the stable category of the ring $\Lambda$ is cocomplete.
\end{theo}
\begin{proof}
(i)$\Rightarrow$(ii): The condition (x) of Theorem 5.2 and Lemma 2.6 imply the claim. 

(ii)$\Rightarrow$(i): Lemma 5.1 implies the condition (v) of Theorem 5.2.\vskip+1mm
\end{proof}
Note that Theorem 5.3 generalizes Proposition 9.4 of \cite{MZ} which asserts that the stable category is cocomplete if $\Lambda$ is left hereditary and the injective envelope of $_{\Lambda}\Lambda$ is projective. By Proposition 9.3 of \cite{MZ}, rings of this kind are left perfect and right coherent.
\vskip+1mm
Theorem 5.3 and Example 4.2 (a) imply
\begin{cor}
Let $\Lambda$ be a Dedekind domain. The stable category of $\Lambda$ has cokernels if and only if $\Lambda$ is left perfect.
\end{cor}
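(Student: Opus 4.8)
The plan is to read this off directly from Theorem 5.3, once I have checked that a Dedekind domain satisfies every hypothesis needed and that the ``right coherent'' clause is automatic and hence invisible in the final statement. First I would invoke the classical characterization of Dedekind domains as exactly the hereditary integral domains; in particular $\Lambda$ is left hereditary, since it is commutative. Moreover a Dedekind domain is Noetherian, and every commutative Noetherian ring is (right) coherent, because over a Noetherian ring every finitely generated module is finitely presented. Consequently, for such a $\Lambda$ the condition ``left perfect and right coherent'' collapses to ``left perfect,'' and this is precisely what licenses stating the corollary with the single word ``left perfect.''

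Next I would dispose of the degenerate case in which $\Lambda$ is a field. Here $\Lambda$ is semisimple, so every module is projective; hence the only stable module up to isomorphism is $\{0\}$, and every object of the stable category is isomorphic to the zero object. The stable category is therefore (equivalent to) the terminal category, which trivially has cokernels. Since a field is also left perfect, both sides of the asserted equivalence hold, and there is nothing to prove.

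Assuming now that $\Lambda$ is not a field, it is an integral domain that is not a field, so Example~4.2(a) tells us that condition (v) of Lemma~4.1 holds. Because $\Lambda$ is left hereditary, all of the equivalent conditions (i)--(viii) of Lemma~4.1 are in force, which is exactly the hypothesis ``under the equivalent conditions of Lemma~4.1'' required by Theorem~5.3. That theorem then gives the equivalence (i)$\Leftrightarrow$(ii): the stable category of $\Lambda$ has cokernels if and only if $\Lambda$ is left perfect and right coherent. Feeding in the automatic right coherence established in the first step reduces the right-hand side to ``left perfect,'' which is the claim. I do not expect a genuine obstacle here: the entire content of the argument is the correct matching of hypotheses, with the only two nontrivial inputs being the standard facts that a Dedekind domain is hereditary and Noetherian; the mild subtlety worth flagging explicitly is that right coherence is free of charge and that the field case must be handled separately since a field fails condition (v) of Lemma~4.1.
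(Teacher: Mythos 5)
Your proof is correct and takes essentially the same route as the paper: invoke Theorem~5.3 together with Example~4.2(a), and observe that a Dedekind domain is Noetherian, hence right coherent, so the ``right coherent'' clause is automatic and the equivalence collapses to ``left perfect.'' Your separate treatment of the case where $\Lambda$ is a field is a sensible precaution that the paper silently omits (it matters only under the convention that fields count as Dedekind domains, since a field fails condition (v) of Lemma~4.1), but it does not change the argument in substance.
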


\begin{proof}
It suffices to note that any Dedekind domain is Noetherian and hence right coherent.
\end{proof}


It is easy to observe that for a left hereditary, left perfect and right coherent ring, the cokernel of a morphism $\underline{f}$, for $f:K\rightarrow M$, is the pair $(R(M)/R(K), coker R(f)m\pi)$, where $\pi$ is the canonical projection of the representation from the condition (xii), while $m$ is the embedding $N\rightarrowtail R(M)$ from the same condition. However, in fact, there is much simpler construction of a cokernel in the stable category (provided that a ring is left hereditary, left perfect and right coherent). To give it, first recall one of several criteria for an epimorphism of modules to be an epimorphism in the stable category given in \cite{MZ}.
\begin{theo} (\cite{MZ}, Theorem 5.7)
Let $\pi:M\twoheadrightarrow N$ be an epimorphism of modules. The following conditions are equivalent:
\vskip+2mm
(i) the morphism $\underline{\pi}$ of the stable category is an epimorphism;
\vskip+2mm
(ii) for any projective module $P$ and a homomorphism $g:M\rightarrow P$ there is a morphism $s:N\rightarrow M$ such that $\pi's=g'$, where the following square is a pushout in $\Lambda$-$Mod$:
\begin{equation}
\xymatrix{M\ar[d]_{g}\ar@{->>}[r]^{\pi}&N\ar[d]^{g'}\ar@{-->}[dl]^{s}\\
P\ar@{->>}[r]_{\pi'}&D}
\end{equation}
\end{theo}
\vskip+3mm
\begin{prop}
Let $\Lambda$ be hereditary, and let $f:S\rightarrow M$ be a homomorphism of modules. If $S$ is stable, then $\underline{\pi}$ is the cokernel of $\underline{f}$, where $\pi:M\twoheadrightarrow M/f(S)$ is the canonical projection.
\end{prop}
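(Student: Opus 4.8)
The plan is to verify directly that $\underline{\pi}:M\to M/f(S)$ enjoys the universal property of the cokernel of $\underline{f}$ in $\underline{\Lambda-Mod}$. Everything hinges on a single consequence of Lemma 2.1: since $\Lambda$ is hereditary and $S$ is stable, \emph{every} homomorphism from $S$ into a projective module is zero. I would keep this observation at the centre of each step. The first, trivial, requirement $\underline{\pi}\,\underline{f}=0$ is immediate, since already $\pi f=0$ in $\Lambda$-$Mod$ because $\pi$ annihilates $f(S)$.

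For the existence of the comparison morphism, I would take an arbitrary $\underline{h}:M\to D$ with $\underline{hf}=0$. By definition $hf$ factors through a projective, say $hf=ba$ with $a:S\to P$ and $P$ projective; but $a=0$ by the fact above, so in fact $hf=0$ in $\Lambda$-$Mod$. Thus $h$ annihilates $f(S)$ and therefore factors as $h=\varphi\pi$ for a module homomorphism $\varphi:M/f(S)\to D$, which gives $\underline{\varphi}\,\underline{\pi}=\underline{h}$.

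The crux is the uniqueness of $\underline{\varphi}$, i.e. that $\underline{\pi}$ is an epimorphism in the stable category, and here I would appeal to the criterion of Theorem 5.5. Given any projective $P$ and homomorphism $g:M\to P$, form the pushout of $g$ along $\pi$ as in that theorem. Since $gf:S\to P$ is a map from a stable module to a projective, $gf=0$, so $g$ annihilates $\ker\pi=f(S)$; consequently the pushout degenerates, $\pi'$ becomes an isomorphism, and the splitting $s:M/f(S)\to P$ with $\pi's=g'$ demanded by condition (ii) of Theorem 5.5 exists trivially (one may take $s={\pi'}^{-1}g'$). Hence $\underline{\pi}$ is an epimorphism, and combined with the previous paragraph this makes $\underline{\pi}$ a cokernel of $\underline{f}$.

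I expect this last step to be the only one requiring attention, but it too collapses because stability of $S$ forces $gf=0$ and thereby trivializes the relevant pushout. Should one prefer to bypass Theorem 5.5, the same fact yields a direct argument for uniqueness: if $\chi:M/f(S)\to D$ satisfies $\underline{\chi}\,\underline{\pi}=0$, write $\chi\pi=ba$ with $a:M\to P$ and $P$ projective; then $af=0$ forces $a$ to factor as $a=\bar a\pi$, whence $\chi=b\bar a$ after cancelling the (module-theoretic) epimorphism $\pi$, so that $\underline{\chi}=0$.
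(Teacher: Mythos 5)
Your proof is correct and takes essentially the same route as the paper: the factorization (weak cokernel) part rests on the fact that every homomorphism from the stable module $S$ to a projective vanishes (Lemma 2.1, using that $\Lambda$ is hereditary), and the epimorphism part is obtained from Theorem 5.5 by observing that any $g:M\rightarrow P$ with $P$ projective kills $f(S)$ and hence factors through $\pi$, which makes the pushout comparison map $s$ exist trivially. Your closing alternative --- factoring $a$ through $\pi$ and cancelling the surjection $\pi$ to get uniqueness directly, bypassing Theorem 5.5 --- is a nice self-contained variant, but the main argument coincides with the paper's own proof.
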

\begin{proof}
Since $S$ is stable, the morphism $\underline{\pi}$ is a weak cokernel of $\underline{f}$. To show that the morphism $\underline{\pi}$ is an epimorphism in the stable category, we observe that $f(S)\subseteq Ker g$, for any homomorphism $M\rightarrow P$ with a projective $P$. Hence there is $s$ such that $s\pi=g$. Now it suffices to apply Theorem 5.5.
\end{proof}

\begin{cor}
Let $\Lambda$ be hereditary, left perfect and right coherent. For an arbitrary homomorphism  $f:K\rightarrow M$, the cokernel of the morphism $\underline{f}$ of the stable category is $\underline{\pi}$, where $\pi$ is the canonical projection $M\twoheadrightarrow M/f(R(K))$.
\end{cor}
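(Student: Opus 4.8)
The plan is to reduce the arbitrary homomorphism $f:K\rightarrow M$ to the stable case already handled in Proposition 5.6, by replacing the domain $K$ with its stable coreflection $R(K)$. By Proposition 2.8 the module $R(K)$ is stable, and the counit component $\varepsilon_K:R(K)\rightarrowtail K$ is the embedding of the sum of all stable submodules of $K$. The crucial input is Theorem 5.2: since $\Lambda$ is left hereditary, left perfect and right coherent, condition (xi) holds, so $\underline{\varepsilon_K}$ is an isomorphism in the stable category.

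First I would record that cokernels are insensitive to precomposition with an epimorphism: if $\underline{e}$ is an epimorphism of the stable category (in particular, an isomorphism), then $\underline{f}$ and $\underline{f}\,\underline{e}$ have the same cokernel, because a morphism $\underline{u}$ satisfies $\underline{u}\,\underline{f}\,\underline{e}=0$ exactly when it satisfies $\underline{u}\,\underline{f}=0$. Applying this with $\underline{e}=\underline{\varepsilon_K}$, which is an isomorphism by the previous paragraph and hence an epimorphism, the cokernel of $\underline{f}$ coincides with the cokernel of $\underline{f\varepsilon_K}=\underline{f}\,\underline{\varepsilon_K}$; and the homomorphism $f\varepsilon_K:R(K)\rightarrow M$ has stable domain.

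Next I would invoke Proposition 5.6 directly, with $S=R(K)$ and the homomorphism $f\varepsilon_K$. It yields that the cokernel of $\underline{f\varepsilon_K}$ is $\underline{\pi}$, where $\pi:M\twoheadrightarrow M/(f\varepsilon_K)(R(K))$ is the canonical projection. Since $\varepsilon_K$ is the inclusion $R(K)\rightarrowtail K$, we have $(f\varepsilon_K)(R(K))=f(R(K))$, whence $M/(f\varepsilon_K)(R(K))=M/f(R(K))$, and the assertion follows.

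The argument is short and essentially formal: the only routine points are the identification $(f\varepsilon_K)(R(K))=f(R(K))$, immediate from $\varepsilon_K$ being an embedding, and the stability of cokernels under precomposition with an epimorphism. The genuine content — that $\underline{\varepsilon_K}$ is an isomorphism — is not established here but imported from Theorem 5.2, which is precisely where the hypotheses \emph{left perfect} and \emph{right coherent} enter. Thus the main step is conceptual rather than computational: recognizing that these hypotheses are exactly what render the replacement of $K$ by $R(K)$ invisible in the stable category, reducing the statement to the already-proved Proposition 5.6.
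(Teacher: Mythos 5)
Your proof is correct and follows essentially the same route as the paper: both reduce to Proposition 5.6 by replacing $f$ with its restriction $f\varepsilon_K=f|_{R(K)}$ to the stable module $R(K)$, using Theorem 5.2 (condition (xi)) to see that this replacement does not change the cokernel in the stable category. Your write-up merely makes explicit the justification (invariance of cokernels under precomposition with an isomorphism) that the paper leaves implicit.
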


\begin{proof}
By Theorem 5.2 the cokernel of $\underline{f}$ coincides with the cokernel of the morphism $\underline{f|_{R(K)}}$, where $f|_{R(K)}:R(K)\rightarrow M$ is the restriction of $f$ on $R(K)$. Now it suffices to apply Proposition 5.6.
\end{proof}
\vskip+2mm
Finally recall that, as it was noted in \cite{MZ}, the quotient functor \begin{center}
$Q:\Lambda-Mod\rightarrow \underline{\Lambda-Mod}$
\end{center} \noindent never preserves cokernels unless the stable category is trivial.

\vskip+2mm
\textit{Author's address:
Dali Zangurashvili, A. Razmadze Mathematical Institute of Tbilisi State University}
\textit{6 Tamarashvili Str., Tbilisi 0177, Georgia, e-mail: dali.zangurashvili@tsu.ge}

\end{document}